\newtheorem{theorem}{Theorem}[section]
\newtheorem{corollary}[theorem]{Corollary}
\newtheorem{lemma}[theorem]{Lemma}
\theoremstyle{definition}
\newtheorem{example}[theorem]{Example}
\newtheorem{question}[theorem]{Question}
\theoremstyle{remark}
\newtheorem*{remark}{Remark}
\newcommand{\defaultalphabet}{X}
\newcommand{\Gp}{\operatorname{Gp}}
\newcommand{\Mon}{\operatorname{Mon}}
\newcommand{\Inv}{\operatorname{Inv}}
\newcommand{\presentation}[3]{#1 \left\langle \, #2 \mid #3 \, \right\rangle}
\newcommandtwoopt{\gpPres}[2][R][\defaultalphabet]{\presentation{\Gp}{#2}{#1}}
\newcommandtwoopt{\monPres}[2][R][\defaultalphabet]{\presentation{\Mon}{#2}{#1}}
\newcommandtwoopt{\invPres}[2][R][\defaultalphabet]{\presentation{\Inv}{#2}{#1}}
\newcommand{\subGp}[1]{\Gp \left\langle #1 \right\rangle}
\newcommand{\subMon}[1]{\Mon \left\langle #1 \right\rangle}
\newcommand{\subInv}[1]{\Inv \left\langle #1 \right\rangle}
\newcommand{\pref}{\operatorname{pref}}
\newcommand{\red}{\operatorname{red}}
\newcommand{\freemon}[1][\defaultalphabet]{ \overline{#1}^{\ast}}
\newcommand{\FG}{\operatorname{FG}}
\title{
The word problem of finitely presented special inverse monoids via their groups of units

\footnotetext[0]{
$2020$ \textit{Mathematics Subject Classifaction}:
20F05,
20F10,
20M05,
20M18.
}

\footnotetext[0]{
\textit{Keywords}:
E-unitary,
Prefix membership problem,
Special inverse monoid,
Word problem
}

}
\author{
Jonathan Warne
\footnote{
Work undertaken while a PhD student at the University of East Anglia under the supervision of Robert Gray
}
}
\begin{document}

\maketitle

\begin{abstract}

	A special inverse monoid is one defined by a presentation where all the defining relations have the form $r = 1$.
By a result of Ivanov Margolis and Meakin the word problem for such an inverse monoid can often be reduced to the word problem in its maximal group image together with membership in a particular submonoid of that group, called the prefix monoid, being decidable.
We prove several results that give sufficient conditions for the prefix membership problem of a finitely presented group to be decidable.
These conditions are given in terms of the existence of particular factorisations of the relator words.
In particular we are able to find sufficient conditions for a special inverse monoid, its maximal group image and its group of units to have word problems that are algorithmically equivalent.
These results extend previous results for one-relator groups to arbitrary finitely presented groups.
We then apply these results to solve the word problem in various families of E-unitary special inverse monoids.
We also find some criteria for when amalgamations of E-unitary inverse monoids are themselves E-unitary.

\end{abstract}

\section{Introduction}

There have been several papers in the past 30 years looking at the conditions under which special inverse monoids have decidable word problem.
For instance Birget, Margolis and Meakin showed that having a single idempotent relator is sufficient \cite{BMM94}.
Hermiller, Lindblad and Meakin demonstrated that a single sparse relator was also sufficient \cite{HLM10}.
What might be called surface inverse monoids, those with presentations corresponding to those of surface groups, both of orientable \cite{IMM01} \cite{MMZ05} \cite{Mea05}
and non-orientable type \cite{DolGra21} have also been shown to have decidable word problems.
This focus on special inverse monoids is partially motivated by the fact that being able to decide word problem of special inverse monoids with a single reduced relator would imply the decidability of the word problem for one relation monoids \cite[Theorem $2.2$]{IMM01}.

	Each special inverse monoid has a natural association with two groups, its maximal group image and its group of units.
Therefore a reasonable question is what conditions on these groups are necessary and which sufficient for the special inverse monoid to have decidable word problem.

	Such an approach has proved very successful for finitely presented special monoids.
It was shown not only that the group of units could be expressed in a presentation with as few relations as the original monoid but that the monoid had decidable word problem if and only if its group of units has decidable word problem; this was shown in the one relator case by Adjan \cite{Adj66}, generalised to multiple relators by Makanin \cite{Mak66} and then later simplified by Zhang \cite{Zha92a} \cite{Zha92b}.

	For special inverse monoids however such a simple correspondence does not hold in general.
While being able to decide the word problem of the inverse monoid implies that one can decide the word problem of the group of units the converse is not always true, see for instance the undecidable one-relator inverse monoid in \cite{Gra20}.
It has also been shown by Gray, Silva and Sz\'akacs that there are inverse monoids with undecidable word problem whose maximal group images are hyperbolic and therefore possess decidable problem \cite{GSS22}.
Furthermore, consider the general form 
\begin{equation*}
M = \invPres[t_1 w_1 t_1^{-1} = 1, \ldots, t_k w_k t_k^{-1}, xx^{-1} = 1 = x^{-1}x \, (x \in X)][X, t_1, \ldots, t_k],
\end{equation*}
where $w_i \in \freemon$ for $1 \leq i \leq k$.
We may map this homomorphically to the bicyclic monoid $\invPres[t_i t_i^{-1} = 1][t_i]$ for any $1 \leq i \leq k$ so each $t_i$ is not a unit in $M$;
by combining this with \cite[Lemma $4.2$]{IMM01} it may be deduced that the group of units is $\gpPres[][X] \leq M$ and therefore has decidable word problem.
The inverse monoid $M$ has maximal group image $G = \gpPres[ w_1 = 1, \ldots, w_k = 1][X, t_1, \ldots, t_k]$.
This means we can choose $G$ to have undecidable word problem despite $U_M$ being a free group.
Thus we see that the decidability of the word problems of the special inverse monoid, its maximal group image and its group of units are in general independent.

	Therefore we may ask under what conditions the inverse monoid, its maximal group image and its group of units do have equivalent word problem.
A result of Ivanov, Margolis and Meakin \cite{IMM01} says that a special inverse monoid's maximal group image having decidable word problem and prefix membership problem is sufficient to decide the word problem for the E-unitary maximal image of the inverse monoid.
Using this Dolinka and Gray \cite{DolGra21} were able to find some conditions under which the word problem of the special inverse monoid is decidable in the one-relator case.
Further Gray and Ruskuc \cite{GraRus23} established a set of Makanin-style results which provide a finite presentation for the group of units of certain special inverse monoids.

	The primary aim of this paper is to extend the Dolinka and Gray \cite{DolGra21} results from the one-relator to the finitely presented case, that is to find conditions under which the word problem of a finitely presented special inverse monoid has decidable word problem if its maximal group image does.
We also synthesise this with some other results, including Gray and Ruskuc \cite{GraRus23}, to find conditions under which the word problems of a special inverse monoid, its maximal group image and its group of units are equivalent.
This approach is complicated at certain points by the ``loss'' of certain one-relator group results.
For instance one cannot verify the E-unitarity of examples solely by checking if the relators are cyclically reduced as may be done in the one-relator case due to \cite[Theorem $4.1$]{IMM01} nor can one use the Freiheitsatz to show that each factor is of infinite order in the disjoint alphabet case.

	The main results are chiefly relevant to E-unitary inverse monoids.
In order to produce examples to which the results may be applied we need to be able to find finitely presented E-unitary special inverse monoids with more than one relator.
To do this we develop the theory of Stephen \cite{Step98} regarding when amalgams preserve the E-unitary property.
We show that his general but opaque condition has as a consequence several specific naturalistic conditions whose fulfilment is easier to prove.
	
	In section $2$ we lay out some basic definitions as well as a number of key known theorems.
Section $3$ elaborates on a theorem of Stephen regarding which amalgamations of E-unitary inverse monoids are themselves E-unitarity.
Section $4$ is focused on the prefix monoid and the related property of conservative factorisation.
Sections $5$ and $6$ prove some decidability results for presentations with uniquely marked and alphabetically disjoint factorisations respectively.
We conclude with some remarks about the limits of these results and possible extensions.

\section{Preliminaries}

	\paragraph{Words}
	Let $X$ be an alphabet (a non-empty set of letters) and let $X^{\ast}$ denote the \textit{free monoid} which is the set of words written over this alphabet including the empty word, denoted $1$.
As we are concerned here with groups and inverse monoids we introduce $\overline{X} = X \cup X^{-1}$, where $X^{-1} = \lbrace x^{-1} \mid x \in X \rbrace$, with the obvious bijective correspondence between $X$ and its `copy'.
This may be extended from letters to words when we say that for $w \equiv x_1^{\varepsilon_1} x_2^{\varepsilon_2} \ldots x_k^{\varepsilon_k}$, where $\varepsilon_i \in \lbrace -1, 1 \rbrace$ and $x_i \in X$, the corresponding inverse is $w^{-1} \equiv x_k^{-\varepsilon_k} \ldots x_2^{-\varepsilon_2} x_1^{-\varepsilon_1}$.

	We use $w(x_1, x_2, \ldots, x_k)$ to indicate a word written over a certain set of letters and their inverses, i.e. that $w \in \freemon[ \lbrace x_1, x_2, \ldots, x_k \rbrace ]$.
In a similar manner for a set of words $u_1, u_2, \ldots, u_k$ we use $w(u_1, u_2, \ldots, u_k)$ to indicate that the word $w$ may be written over the pieces $u_i$ and their inverses in the free monoid.
Substitutions of pieces in such words work as might be expected, for instance if it is established that $w(x,y)$ indicates the word $xyx^{-1}$ then $w(u,v)$ would indicate $uvu^{-1}$. 
We also assume that for an individual word $w(x_1, \ldots, x_k)$ that there is no subset of the $x_j$ such that the word may be written over that subset, i.e. for each $1 \leq j \leq k$ there is at least one appearance of $x_j$ or $x_j^{-1}$.
If we say that there are a set of words $w_i(x_1, \ldots, x_k)$ for $i \in I$ then we only assume that there is no subset of the $x_j$ such that all the $w_i$ may be written over that subset, i.e. for each $1 \leq j \leq k$ there is at least one word $w_i$, where $i \in I$, in which $x_j$ or $x_j^{-1}$ appears.

	\paragraph{Congruences and Presentation}
	Let $\rho \subseteq \freemon \times \freemon$ be a set of pairs of words which form an equivalence relation.
If this equivalence relation is also closed under multiplication, $(u_1,v_1),(u_2,v_2) \in \rho \Rightarrow (u_1u_2,v_1v_2 \in \rho)$ then we call $\rho$ a \textit{congruence}.
In this case we may form the object $\freemon / \rho$ by equating any words $u,v \in \freemon$ such that $(u,v) \in \rho$.

	We may speak of a congruence $\rho$ generated by a set of pairs of words $(u_i, v_i) \subset \freemon \times \freemon$ for $i \in I$.
When we do this we mean that $\rho$ is the smallest congruence containing all the generating pairs.
For an example let $\rho$ be the congruence generated by $(xx^{-1},1)$ and $(x^{-1}x, 1)$ for all $x \in X$.
The object produced by the quotient of $\freemon$ by this congruence is the free group on $X$, denoted $\FG(X)$.
If we have some congruence $\rho^{\prime}$ generated by this $\rho$ and the set of pairs $(u_i,v_i)$, for $i \in I$, then we may express the object $\freemon / \rho^{\prime}$ as
\begin{equation*}
	G = \gpPres[u_i = v_i \, (i \in I)]
\end{equation*}
which we call a \textit{group presentation}.

	In a similar manner we may define presentations for \textit{inverse monoids}, which model partial bijections in the same sense that groups model full bijections.
Let $\rho$ be the congruence generated by $(uu^{-1}u,u)$ and $(uu^{-1}vv^{-1},vv^{-1}uu^{-1})$ for $u,v \in \freemon$, we call this the \textit{Wagner congruence} on $X$.
Similarly to groups if we let $\rho^{\prime}$ be the smallest congruence generated by $\rho$ and some set of pairs of words $(u_i, v_i)$ then we may express the object $\freemon / \rho^{\prime}$ as
\begin{equation*}
	M = \invPres[u_i = v_i \, (i \in I)]
\end{equation*}
which we call an \textit{inverse monoid presentation}.
If we have a inverse monoid of the form $\invPres[w_i = 1 \, (i \in I)]$ then we call this a \textit{special inverse monoid}.

	For an inverse monoid $M$, we say that $m \in M$ is a \textit{left units} if it is such that $m^{-1} m = 1$, a \textit{right unit} if $mm^{-1} = 1$ and a \textit{unit} if it is both a left unit and a right unit.
We will use $L_M$, $R_M$ and $U_M$ to denote to the sets of all left units, all right units and all units of $M$ respectively.

	We call a group or inverse monoid finitely presented if it may be presented using a finite set of generators and defining relations and we will be dealing exclusively with finite presentations throughout.

	\paragraph{E-unitary Inverse Monoids}

	For an inverse monoid $M$ there is the notion of a \textit{minimum group congruence} $\sigma$ which is the smallest congruence such that $G = M / \sigma$ is a group, we call this $G$ the \textit{maximal group image}.
A special inverse monoid $M = \invPres[w_i = 1 \, (i \in I)]$ has maximal group image $G = \gpPres[w_i = 1 \, (i \in I)]$.

	For a given inverse monoid $M$, we say that an element $e \in M$ is \textit{idempotent} if $e^2 = e$ and designate the set of all idempotents in $M$ by $E_M$.
An inverse monoid $M$ is said to be \textit{E-unitary} when an element of $M$ is $\sigma$ congruent to $1$ if and only if it is idempotent, or equivalently when $\sigma^{-1}(1) = E_M$.

	One of our most useful criterea for E-unitarity comes from Ivanov, Margolis and Meakin \cite[Theorem $4.1$]{IMM01}.
\begin{theorem}		\label{thm:cycRedIsEu}
	Let $M = \invPres[w=1]$ be an inverse monoid.
If $w \in \freemon$ is a cyclically reduced word then $M$ is E-unitary.
\end{theorem}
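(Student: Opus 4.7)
The plan is to prove that whenever $u \in \freemon$ satisfies $u \sigma = 1$, meaning $u$ represents the identity in $G = \gpPres[w = 1]$, the element represented by $u$ in $M$ is idempotent. I would do this via the Schützenberger graph $S\Gamma(u)$, recalling that $u$ is idempotent in $M$ precisely when the initial and terminal vertices of $S\Gamma(u)$ coincide, so the proof reduces to showing this coincidence whenever $u \sigma = 1$.

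The first step is Stephen's iterative procedure. Starting from the linear $\overline{X}$-labelled path reading $u$, one repeatedly (i) sews on a cycle labelled $w$ at any vertex from which $w$ (equivalently $w^{-1}$) can be read as a sub-path, and (ii) performs Stallings foldings to identify any two edges at a common vertex carrying the same label. The limit is $S\Gamma(u)$, and there is a canonical label-preserving morphism $\pi \colon S\Gamma(u) \to \Gamma(G, X)$ into the Cayley graph of $G$ sending the initial vertex $\alpha$ to the identity of $G$. Since $u \sigma = 1$, the terminal vertex $\beta$ also maps to the identity.

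The crux is to show that $\pi$ is injective on vertices, which forces $\alpha = \beta$. This is exactly where the cyclic reducedness of $w = x_1^{\varepsilon_1} \cdots x_k^{\varepsilon_k}$ is needed: it tells us both that $w$ is reduced (no consecutive pair $x_i^{\varepsilon_i} x_{i+1}^{\varepsilon_{i+1}}$ is of the form $y y^{-1}$) and that the wrap-around pair $x_k^{\varepsilon_k} x_1^{\varepsilon_1}$ is not of that form either. Consequently, when we attach a fresh $w$-loop at some vertex $v$, no self-foldings are forced by the loop itself; any foldings that subsequently occur are forced by identifications already present in the graph, hence already realised in $\Gamma(G, X)$. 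An induction on the number of sew-and-fold steps in Stephen's construction then shows that injectivity of $\pi$ is preserved at every stage and so holds in the limit $S\Gamma(u)$.

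The main obstacle is making this inductive step rigorous, in particular handling the case in which the newly attached $w$-loop is forced to share vertices with the pre-existing graph after folding, so that one must trace exactly which identifications occur and verify they are mirrored in the Cayley graph. Cyclic reducedness is used to rule out the sole obstruction — a folding that ``shortcuts'' through the wrap-around letter pair of $w$ — and once that is excluded, the two vertices $\alpha, \beta$ have the same image under an injective morphism, hence coincide. This makes $u$ an idempotent of $M$ and so, by the characterisation $\sigma^{-1}(1) = E_M$, establishes that $M$ is E-unitary.
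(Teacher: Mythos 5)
The paper does not actually prove this statement: it is quoted as Theorem $4.1$ of Ivanov, Margolis and Meakin \cite{IMM01}, whose proof rests on substantial one-relator group theory. Your initial reduction is fine: since $M$ is E-unitary exactly when $\sigma^{-1}(1)=E_M$, it suffices to show that any word $u$ with $u=1$ in $G$ is idempotent in $M$, and if the canonical morphism $\pi\colon S\Gamma(u)\to\Gamma(G,X)$ were injective on vertices this would force the initial and terminal vertices to coincide. The gap is in how you propose to get injectivity. Your induction cannot even start: for a nonempty $u$ with $u=1$ in $G$, the linear automaton of $u$ already has distinct initial and terminal vertices with the same image in the Cayley graph, so ``injectivity of $\pi$ is preserved at every stage'' fails at stage zero; and injectivity of $\pi$ on the limit graph $S\Gamma(u)$ is, for such $u$, essentially the statement being proved, so an induction that assumes it survives each sew-and-fold step begs the question. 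The inductive step also conflates two different things. Cyclic reducedness of $w$ only guarantees that a freshly sewn $w$-loop does not fold onto itself, which is a statement about free reduction. Injectivity of $\pi$ additionally requires that the images $\pi(v)\cdot p$, for proper nonempty prefixes $p$ of $w$, are pairwise distinct in $G$ and distinct from the images of all previously existing vertices. The first requirement is already a nontrivial theorem about one-relator groups (Weinbaum's theorem that no proper nonempty subword of a cyclically reduced relator represents $1$ in $G$), and the second is a global condition on $G$ that no local folding analysis can deliver: vertices of the approximating graph may have equal images in $\Gamma(G,X)$ without any folding ever being forced, and such unresolved collisions are precisely how E-unitarity fails.

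There is also a decisive structural objection: nothing in your sketch uses the fact that there is only one relator. Stephen's sewing-and-folding procedure, and the observation that a cyclically reduced loop forces no self-foldings, apply verbatim to a presentation with several cyclically reduced relators; if your argument were valid it would therefore show that every special inverse monoid all of whose relators are cyclically reduced is E-unitary. But, as remarked immediately after the statement of this theorem in the paper, Ivanov, Margolis and Meakin exhibit a two-relator counterexample to exactly that claim. So any correct proof must exploit cyclic reducedness through genuinely one-relator input (the Freiheitssatz, Weinbaum-type subword theorems, or the diagram-theoretic arguments used in \cite{IMM01}), and your outline does not engage with any of these.
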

In the same paper it is also shown, by example, that all relations being cyclically reduced is insufficient, even in the two relator case, to guarantee the inverse monoid being E-unitary.

	Let $a \sim b$ if and only if $ab^{-1}$ and $b^{-1}a$ are both idempotents, we call this the \textit{compatibility relation}.
This relation is only transitive in E-unitary inverse monoids, moreover an inverse monoid is E-unitary if and only $\sigma = \sim$, that is if its minimum group congruence is the same as the compatibility relation.
This means that for E-unitary $M$ with maximal group image $G$, $u = v$ in $G$ if and only if $u \sim v$ in $M$.

	The above paragraph and the rest of this subsection draws on a work of Lawson \cite{Law16} (available online) which itself draws its material primarily from his book \cite{Law98}.
The following is a standard result, the proof of which may be found in \cite[Lemma $2.10$]{Law16}.
\begin{lemma} 	\label{lem:equivPOconds}
	Let $M$ be an inverse monoid and let $x, y \in M$.
Then the following are equivalent:
\begin{itemize}
	\item That $x = ye$ for some $e \in E_M$.
	\item That $x = fy$ for some $f \in E_M$.
	\item That $x = xx^{-1}y$.
	\item That $x = yx^{-1}x$.
\end{itemize}
\end{lemma}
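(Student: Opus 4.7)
The plan is to exploit two standard facts about inverse monoids: any two idempotents commute, and $uu^{-1}$ and $u^{-1}u$ are idempotent for every $u \in M$. Each of the four equivalences then reduces to a short product manipulation. I would establish (1) $\Leftrightarrow$ (3) first, and then handle (1) $\Leftrightarrow$ (4) and (1) $\Leftrightarrow$ (2) by essentially parallel arguments.

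For (1) $\Rightarrow$ (3), start from $x = ye$ and compute $xx^{-1}y = ye(ye)^{-1}y = yey^{-1}y$, using that $e = e^{-1}$. Commuting the idempotents $e$ and $y^{-1}y$ then rewrites this as $yy^{-1}y \cdot e = ye = x$. For the converse (3) $\Rightarrow$ (1), the natural candidate is $e := x^{-1}x \in E_M$, so the real task is verifying $ye = x$. The key intermediate step is that taking inverses in $x = xx^{-1}y$ gives $x^{-1} = y^{-1}xx^{-1}$, and right-multiplying by $x$ yields $x^{-1}x = y^{-1}x$. Hence $ye = yy^{-1}x$, and a final substitution of $xx^{-1}y$ for $x$, followed by commuting the idempotents $yy^{-1}$ and $xx^{-1}$, collapses this to $x$.

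For (1) $\Leftrightarrow$ (4), the same choice $e = x^{-1}x$ shows that (4) is just the statement $ye = x$ rewritten, so it falls out of the previous argument; alternatively one may compute $yx^{-1}x = y(ye)^{-1}(ye)$ and apply the same idempotent-commutation moves directly. For (1) $\Leftrightarrow$ (2), from $x = ye$ take $f := yey^{-1}$, which is idempotent by a short computation using idempotent commutativity, and verify $fy = yey^{-1}y = y(y^{-1}y)e = ye = x$; the converse direction is symmetric, setting $e := y^{-1}fy$ when $x = fy$ is given.

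The main obstacle, if there is one, is purely notational: keeping track of where to commute the idempotents in each product, and remembering to use $uu^{-1}u = u$ at the last step of each manipulation. No deeper input is needed beyond the two foundational facts noted at the outset, which is why the lemma is standard.
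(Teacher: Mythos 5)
Your proof is correct: all four implications you sketch go through, the witnesses $e = x^{-1}x$, $f = yey^{-1}$ and $e = y^{-1}fy$ are the right ones, and every manipulation uses only that idempotents commute, that $uu^{-1}$ and $u^{-1}u$ are idempotent, and that $uu^{-1}u = u$. Note that the paper does not actually prove this lemma itself but defers to Lawson's notes (Lemma 2.10 there), so there is no in-paper argument to compare against; your computation is the standard one and is essentially what that reference does, so it would serve as a self-contained replacement for the citation.
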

By saying that $x \leq y$ if the elements satisfy these (equivalent) conditions, we may form a partial order on $M$.
For a partial order we can speak of the concept of the \textit{meet} of two elements, denoted $x \wedge y$, which (when it is definable) is the greatest lower bound of the two elements.
That is if $z = x \wedge y$ then $z \leq x, y$ and for all $z^{\prime} \leq x, y$ we also have that $z^{\prime} \leq z$.
It is important to note that $x \wedge y$ is not necessarily defined for all pairs of elements of an inverse monoid.
In fact the existence of a meet is tied to the idea of compatibility.

	The following result essentially follows from Lawson \cite[Lemma $2.16$]{Law16}, for completeness we provide a proof.

\begin{lemma}	\label{lem:meetIFFcompatible}
	Let $m,n$ be elements of an E-unitary inverse monoid $M$.
Then $m \wedge n$ exists if and only if $m \sim n$.
Moreover if $m \wedge n$ exists then it is equal to all of the following
$mm^{-1}n$, $nn^{-1}m$, $mn^{-1}n$ and $nm^{-1}m$.
\end{lemma}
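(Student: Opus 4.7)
The plan is to handle the two implications and the four-fold identification separately, relying only on Lemma \ref{lem:equivPOconds}, the fact that idempotents in an inverse monoid commute among themselves, and the hypothesis that $M$ is E-unitary.

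For the forward direction, given $z := m \wedge n$, Lemma \ref{lem:equivPOconds} yields an idempotent $e$ with $z = me$, and one verifies that both $zm^{-1} = mem^{-1}$ and $m^{-1}z = m^{-1}me$ are idempotent (the latter is a product of two commuting idempotents, the former follows by a short squaring computation using $mm^{-1}m = m$ and commutation of idempotents). Thus $z \sim m$, and symmetrically $z \sim n$. Since $M$ is E-unitary, $\sim$ coincides with the minimum group congruence $\sigma$ and is in particular transitive, giving $m \sim n$.

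For the converse, suppose $m \sim n$, so that $mn^{-1}$, $n^{-1}m$, and their inverses $nm^{-1}$, $m^{-1}n$ are all idempotent. The first step is to observe that each of $mm^{-1}n$, $nn^{-1}m$, $mn^{-1}n$, $nm^{-1}m$ is a common lower bound of $m$ and $n$; for example $mm^{-1}n = m(m^{-1}n)$ with $m^{-1}n$ idempotent gives $mm^{-1}n \leq m$, while $mm^{-1}$ idempotent gives $mm^{-1}n \leq n$, and the other three are handled by the same pattern. Now fix any common lower bound $w$ of $m$ and $n$; Lemma \ref{lem:equivPOconds} supplies $w = ww^{-1}m = ww^{-1}n = mw^{-1}w = nw^{-1}w$, from which repeated commutation of idempotents yields
\begin{equation*}
ww^{-1}(mm^{-1}n) = mm^{-1}(ww^{-1}n) = mm^{-1}w = w,
\end{equation*}
\begin{equation*}
(mn^{-1}n)w^{-1}w = m(w^{-1}w)(n^{-1}n) = (mw^{-1}w)n^{-1}n = wn^{-1}n = w,
\end{equation*}
together with the obvious analogues for $nn^{-1}m$ and $nm^{-1}m$. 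By Lemma \ref{lem:equivPOconds} these identities place $w$ below each of the four expressions, so each is a greatest lower bound of $m$ and $n$; uniqueness of meets then gives simultaneously the existence of $m \wedge n$ and the four claimed equalities.

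The main obstacle is purely bookkeeping: idempotents commute only with other idempotents, not with arbitrary elements, so every rearrangement has to be arranged so that the factors being swapped past each other are genuinely idempotent, and the four canonical forms are essentially distinguished by whether the absorbing idempotent sits on the left or on the right and which of $m,n$ it gets attached to. E-unitarity is used only in the forward direction, to promote $z \sim m$ and $z \sim n$ to $m \sim n$ via transitivity of $\sim$.
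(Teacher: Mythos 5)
Your proposal is correct and follows essentially the same route as the paper: in the forward direction you show $z \sim m$ and $z \sim n$ for $z = m \wedge n$ and invoke transitivity of $\sim$ from E-unitarity, and in the converse you exhibit $mm^{-1}n$ (and its variants) as a common lower bound dominating every lower bound via Lemma \ref{lem:equivPOconds}. The only difference is cosmetic: you verify all four expressions explicitly where the paper treats $mm^{-1}n$ and says the others are similar, and you correctly note E-unitarity is needed only for the forward implication.
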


\begin{proof}
	Suppose that $m \wedge n$ exists.
Put $z = m \wedge n$.
By definition $z \leq m$, which implies that $z = zz^{-1}m$.
From this we get that $zm^{-1} = zz^{-1}mm^{-1}$ and that $z^{-1}m = m^{-1}zz^{-1}m$, inspection shows that both of these are idempotents, and so $z \sim m$.
Dually $z \sim n$ and as the compatibility relation is transitive in E-unitary monoids we have that $m \sim n$.

	Conversely suppose that $m \sim n$, then $m^{-1}n$ is an idempotent.
Set $z = mm^{-1}n$, then $z \leq n$ and $z \leq m$, since $mm^{-1}$ and $m^{-1}n$ are idempotent.
Let $x \in M$ be such that $x \leq m$ and $x \leq n$.
Then $x = xx^{-1}x \leq mm^{-1}n = z$ and so $z$ is the greatest lower bound of $m$ and $n$,
Therefore $m \wedge n$ exists and is equal to $mm^{-1}n$.
The other forms may be found by similar arguments.
\end{proof}

	\paragraph{Decidability}
	We say that a class of true/false questions are \textit{decidable} if there is an algorithm which can determine the answer to any of them in finite time.
For instance, given a group presentation $G = \gpPres[w_i = 1]$ we might ask whether it was decidable if some $w \in \freemon$ was equal to the empty word $1$.
This is called the \textit{word problem} for $G$ and Magnus \cite{Mag32} showed that it is decidable for groups with a single relation.
However, it has also been shown, independently, by Novikov \cite{Nov55} and Boone \cite{Boo57} that the word problem is undecidable in general for finitely presented groups.

	A second class of problems we are interested in are \textit{membership problems}.
If we have a group $G = \gpPres$ and a set $Q \subset G$ then we can decide membership in $Q$ within $G$ if there is an algorithm that takes a word $w \in \freemon$ and in finite time outputs yes if $w$ represents an element of $Q$ within $G$ and no if it does not.
Importantly this does not require us to know which specific element of $Q$ it may represent.

	For words $w, p \in \freemon$ we have the natural notion of $p$ being a prefix if $w \equiv pq$ for $q \in \freemon$, we use $\pref(w)$ to denote the set of prefixes of $w$.
For a group presentation $G = \gpPres[w_i = 1 \, (i \in I)]$ we call
\begin{equation*}
	P = \subMon{\pref(w_i) \, (i \in I)} \leq G
\end{equation*}
the \textit{prefix monoid} of $G$.
It is important to stress here that we do not assume, either in this instance or throughout the rest of the paper, that the $w_i$ are reduced or cyclically reduced just because they are group relators.
We may then ask, for a given group $G$ is membership of $P$ within $G$ decidable, that is for $w \in \freemon$ can we decide if $w$ represents an element of the prefix monoid in $G$.
We call this the \textit{prefix membership problem} and it is dependent on our choice of presentation for $G$
(see, for instance, $\gpPres[aab = 1][a,b]$ and $\gpPres[aba = 1][a,b]$; while both are isomorphic to $\mathbb{Z}$, under the mapping $a \mapsto 1$, $b \mapsto -2$, their prefix monoids would be equal to $\mathbb{N}$ and $\mathbb{Z}$ respectively).

	Another natural class of questions we might ask is: given an inverse monoid $M = \invPres[w_i = 1 \, (i \in I)]$, is it decidable whether $u = v$ in $M$ for $u,v \in \freemon$?
This is the word problem for special inverse monoids (note that while we could formulate the word problem for groups in the same manner it would reduce to way we have phrased it above due to the invertibility of group elements).
This is a question that has been connected to the two previously mentioned problems by another result of Ivanov, Margolis and Meakin \cite[Theorem $3.3$]{IMM01}.
If we let $M_{EU}$ be the maximal E-unitary image, the largest homomorphic image of $M$ which is E-unitary, analogous in the obvious way to the maximal group image, then they show the following.
\begin{theorem}	\label{thm:PMP+GWP=IWP}
	Let $M = \invPres[w_i = 1 \, (i \in I)]$ be an inverse monoid and let $G = \gpPres[w_i = 1 \, (i \in I)]$ be its maximal group image.
If the word and prefix membership problems for $G$ are decidable then the word problem for $M_{EU}$ is decidable, in particular $M$ has decidable word problem if it is E-unitary.
\end{theorem}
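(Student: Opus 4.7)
The plan is to reduce the word problem for $M_{EU}$ to the construction and comparison of canonical automata associated to each word. To any word $w \in \freemon$ one associates in the inverse monoid its Schützenberger automaton $\mathrm{SA}(w)$, with distinguished initial vertex $ww^{-1}$ and terminal vertex $w$; a standard fact is that $u$ and $v$ represent the same element of $M_{EU}$ if and only if $\mathrm{SA}(u)$ and $\mathrm{SA}(v)$ are isomorphic as based automata. So it suffices to show that, under the two decidability hypotheses, one can algorithmically construct $\mathrm{SA}(w)$ for any $w$ and test isomorphism of the resulting objects.

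For E-unitary $M_{EU}$ the automaton $\mathrm{SA}(w)$ has a very transparent shape: since the minimum group congruence $\sigma$ coincides with the compatibility relation $\sim$, any two distinct vertices are distinguished by their images in the maximal group image $G$, so each vertex is uniquely labelled by an element of $G$. Edges then go from $g$ to $gx$ for $x \in \overline{X}$, and any vertex-label is forced to lie in the prefix monoid $P$. Stephen's iterative construction builds $\mathrm{SA}(w)$ from the linear automaton of $w$ by alternating two kinds of move: sewing in a loop reading some relator $w_i$ at a vertex at which this is forced by the relations, and folding together two vertices bearing the same $G$-label. Decidability of the word problem in $G$ makes the folding step effective, while decidability of the prefix membership problem for $G$ tells us which candidate $G$-elements genuinely appear as vertices of $\mathrm{SA}(w)$, and hence allows us to detect when no further expansions are possible.

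I expect the main obstacle to be turning this iterative procedure into a uniformly terminating algorithm with a decidable halting criterion, and verifying that the resulting $G$-labelled automaton is canonical enough that the isomorphism test faithfully solves the word problem in $M_{EU}$. Once this is done, the final clause follows immediately: if $M$ is itself E-unitary then $M = M_{EU}$, and so decidability of the word problem in $M_{EU}$ gives decidability of the word problem in $M$.
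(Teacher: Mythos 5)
You should first note that the paper does not prove this statement at all: it is quoted verbatim from Ivanov, Margolis and Meakin \cite{IMM01} (their Theorem~3.3), so there is no internal proof to compare against. Judged on its own merits, your proposal contains a genuine gap, and the gap sits exactly where you park it: ``turning this iterative procedure into a uniformly terminating algorithm with a decidable halting criterion'' is not a technical loose end but the entire content of the theorem. The route you choose --- build $\mathrm{SA}(u)$ and $\mathrm{SA}(v)$ by Stephen's procedure and test isomorphism --- cannot work as stated, because for special inverse monoids these automata are in general infinite (already for $\invPres[xx^{-1}=1][x]$ the Schützenberger graph of $1$ is an infinite ray), so they can be neither fully constructed nor compared, and no amount of word-problem or prefix-membership decidability makes Stephen's iteration terminate. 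There is also a secondary slip: Stephen's theorem identifies equality in the monoid defined by the given presentation, namely $M$, whereas you need equality in the quotient $M_{EU}$, for which you do not have an effective presentation; and your claim that every vertex label lies in the prefix monoid $P$ is only correct for the Schützenberger graph of a right unit --- for general $w$ the vertex labels form the set $\pref(w)\cdot P \subseteq G$, a union of translates of $P$.

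The missing idea is precisely how to exploit that last description to get a \emph{finite} criterion, so that no automaton is ever built. In the E-unitary picture the Schützenberger graph of $w$ embeds in the Cayley graph of $G$ with vertex set (the image of) $\pref(w)\cdot P$, and two words $u,v$ are equal in $M_{EU}$ if and only if $u=v$ in $G$ and $\pref(u)\cdot P = \pref(v)\cdot P$ as subsets of $G$. Since $P$ is a submonoid containing $1$, the inclusion $\pref(u)\cdot P \subseteq \pref(v)\cdot P$ holds if and only if for every prefix $p$ of $u$ there is a prefix $q$ of $v$ with $q^{-1}p \in P$; together with the symmetric condition this is a finite list of queries, each an instance of the prefix membership problem for $G$, while $u=v$ in $G$ is one instance of the word problem. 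That finite reduction is what the cited proof of Ivanov, Margolis and Meakin supplies and what your sketch lacks; your structural observations (unique $G$-labelling of vertices in the E-unitary case, and the reduction of the final clause to $M = M_{EU}$ when $M$ is E-unitary) are correct but do not by themselves yield an algorithm.
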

This result motivates our work here to determine when the prefix membership problem is decidable for finitely presented special group presentations.

	\paragraph{Amalgamations}
	Suppose we have two groups $G_1 = \gpPres[r_i=1 \, (i \in I_1)][X_1]$ and $G_2 = \gpPres[s_i=1 \, (i \in I_2)][X_2]$, where $X_1 \cap X_2 = \emptyset$.
The \textit{free product} of $G_1$ and $G_2$ is denoted by $G_1 \ast G_2$ and has the following presentation
\begin{equation*}
	 \gpPres[r_{i_1} = 1 \, (i_1 \in I_1), s_{i_2} = 1 \, (i_2 \in I_2)][X_1, X_2].
\end{equation*}
The following is a standard result, see for example \cite[Corrollary IV.1.3]{LynSch01}

\begin{lemma}	\label{lem:freeProdDec}
	Let $G_1$ and $G_2$ be groups with decidable word problem.
Their free product $G_1 \ast G_2$ has decidable word problem.
\end{lemma}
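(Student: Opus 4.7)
The plan is to reduce the word problem in $G_1 \ast G_2$ to repeated word problem queries in the two factors, using the classical normal form theorem for free products: every element of $G_1 \ast G_2$ can be written uniquely as an alternating product $g_{j_1} g_{j_2} \cdots g_{j_n}$ with each $g_{j_\ell} \in G_{i_\ell} \setminus \{1\}$ and $i_\ell \neq i_{\ell+1}$. An element is trivial iff this reduced form has length zero.

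Given $w \in \freemon[X_1 \cup X_2]$, the first step is to parse it as a concatenation $w \equiv w_1 w_2 \cdots w_n$ where each $w_j$ is a maximal factor lying entirely in $\freemon[X_{i_j}]$; by maximality the indices $i_j$ alternate between $1$ and $2$. Each block $w_j$ represents an element of $G_{i_j}$, and by hypothesis there is an algorithm deciding whether $w_j = 1$ in $G_{i_j}$. Apply this algorithm to each block.

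The algorithm then loops the following reduction: if some block $w_j$ represents the identity in its factor, delete it; if this deletion places two previously separated blocks from the same factor $G_i$ adjacent, concatenate them into a single $\freemon[X_i]$-block and re-test. Repeat until every surviving block is non-trivial in its factor. At that point we have produced an alternating sequence of nontrivial factor elements, i.e. a normal form for the element $[w] \in G_1 \ast G_2$. By the normal form theorem, $w = 1$ in $G_1 \ast G_2$ if and only if the reduction terminates with the empty list of blocks. To decide $u = v$ in $G_1 \ast G_2$ we run this procedure on $uv^{-1}$.

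Termination is straightforward: each deletion strictly decreases the number of blocks, and each merge does not increase it and is followed by at most one further test on the merged block, so the total number of oracle calls to the $G_i$-word problems is bounded (by a function of the number of initial blocks). The only non-routine ingredient is the normal form theorem itself, which is standard; no genuine obstacle arises beyond invoking it, since the decidability of the word problems in $G_1$ and $G_2$ supplies exactly the oracle needed to test each block at each stage of the reduction.
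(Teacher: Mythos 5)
Your argument is correct: the paper gives no proof of this lemma, simply citing it as a standard result from Lyndon and Schupp, and your reduction via the normal form theorem for free products (parse into alternating blocks, use the factor word-problem oracles to delete trivial blocks and merge neighbours, conclude $w=1$ iff the list empties) is exactly the standard argument behind that citation. The termination and correctness claims are sound, so nothing further is needed.
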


	We may also speak of a \textit{free product amalgamated over $K$} which we denote by $G_1 \ast_K G_2$.
As we will be amalgamating over finitely generated groups we will also use $G_1 \ast_{u_i = v_i} G_2$ to denote two groups amalgamated over the isomorphic subgroups $\subGp{u_i \, (i \in I)} \leq G_1$ and  $\subGp{v_i \, (i \in I)} \leq G_2$, for some finite index set $I$.
The group $G_1 \ast_K G_2$ has the following finite presentation 
\begin{equation*}
	\gpPres[r_{i_1} = 1 \, (i_1 \in I_1), s_{i_2} = 1 \, (i_2 \in I_2), u_i = v_i \, (1 \leq i \leq n)][X_1, X_2].
\end{equation*}

	We cite the following is a standard result (a proof of which may be found in Lipshutz \cite[Lemma 2]{Lip64}).

\begin{lemma}	\label{lem:amalProdDec}
	Let 
\begin{equation*}
	G_1 = \gpPres[r_{i_1} = 1 \, (i_1 \in I_1)][X_1]
	\text{ and } 
	G_2 = \gpPres[s_{i_2} = 1 \, (i_2 \in I_2)][X_2]
\end{equation*}
be finitely presented groups with decidable word problem.
Further let $u_1, \ldots, u_k \in \freemon[X_1]$ and $v_1, \ldots, v_k \in \freemon[X_2]$ be sets of words such that $\subGp{u_1, \ldots, u_k} \leq G_1$ and $\subGp{v_1, \ldots, v_k} \leq G_2$ are isomorphic under the mapping $\phi: u_i \mapsto v_i$ and have decidable membership within their respective groups.
Then $G_1 \ast_{u_i = v_i} G_2$ has decidable word problem.
\end{lemma}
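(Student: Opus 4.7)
The plan is to appeal to the normal form theorem for amalgamated free products (as in \cite[Theorem IV.2.6]{LynSch01}): every word $w \in \freemon[X_1 \cup X_2]$ admits a \emph{syllable decomposition} $w \equiv w_1 w_2 \cdots w_n$, where each $w_j$ is a non-empty word over either $X_1$ or $X_2$, and (since $X_1 \cap X_2 = \emptyset$) these alphabets alternate along $j$. The normal form theorem tells us that if $n \geq 2$ and \emph{no} syllable $w_j$ represents an element of the amalgamated subgroup $K$ in its respective factor, then $w$ is non-trivial in $G_1 \ast_{u_i = v_i} G_2$; while for $n \leq 1$, triviality in the amalgam reduces to the word problem of the appropriate factor.

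Given an input $w$, I would first compute its syllable decomposition (which is immediate from the disjoint alphabets) and then perform the following reduction until no further step applies. For each syllable $w_j$, say written over $X_\varepsilon$ with $\varepsilon \in \{1,2\}$, use the hypothesis that membership in $\subGp{u_i}$ (respectively $\subGp{v_i}$) within $G_\varepsilon$ is decidable to test whether $w_j \in K$. If some $w_j$ passes this test, effectively produce a word $w_j'$ over the generators of the opposite factor that represents the same element (see the next paragraph for how), substitute it into the decomposition, and concatenate with the adjacent syllables, obtaining a new decomposition with strictly smaller syllable count. The procedure terminates after at most $n$ steps, at which point we output yes if $n = 0$, decide via the word problem of the relevant $G_\varepsilon$ if $n = 1$, and output no if $n \geq 2$ with no syllable in $K$.

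The main obstacle — and the only non-routine step — is the crossing subroutine: once we know $w_j$ represents an element of $K$, we must actually \emph{exhibit} it as a word $u_{i_1}^{\delta_1}\cdots u_{i_m}^{\delta_m}$ (or similarly over the $v_i$), so that we can apply the isomorphism $\phi \colon u_i \mapsto v_i$ by formal substitution. The standard device is brute-force enumeration: the set of words over $\{u_1^{\pm 1}, \ldots, u_k^{\pm 1}\}$ is computably enumerable, and using the decidable word problem of $G_1$ we can test each candidate for equality with $w_j$; since $w_j$ has already been verified to lie in $K$, this search is guaranteed to halt, and the analogous statement holds for $G_2$. Applying $\phi$ (or $\phi^{-1}$) to the resulting expression then yields the desired replacement syllable, completing the algorithm.
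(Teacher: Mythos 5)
Your argument is correct and complete: the reduction of syllable count via the membership tests, the appeal to the Normal Form Theorem when no syllable lies in the amalgamated subgroup, and the enumeration-plus-word-problem subroutine for explicitly rewriting a syllable of $K$ over the generators of the other factor together give a genuine decision procedure, and this is essentially the standard proof of the result. The paper itself does not prove this lemma but only cites it (Lipschutz, Lemma 2), and the cited proof runs along the same lines as yours, so there is nothing to add.
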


	We use $\subInv{W} \leq M$ to represent the \textit{inverse submonoid} generated by $W \in \freemon$ within $M = \invPres$.
This inverse submonoid, like a subgroup but unlike a non-inverse submonoid, satisfies $\subInv{W} = \subInv{W \cup W^{-1}} \leq M$ (this is because groups and inverse monoids, unlike monoids, have a native inverse).
Thus we can talk of constructing free products of inverse monoids both with or without amalgamation by requiring any amalgamation to be over isomorphic inverse submonoids in the same manner that amalgamated group products require isomorphic subgroups.

\section{E-unitary Amalgamations of Inverse Monoids}

	In his paper Stephen \cite{Step98} introduced a sufficient condition for the amalgamated product of E-unitary inverse semigroups to be E-unitary itself, though he did not provide any examples which satisfied the condition.
In order to understand this condition we first need to define the following concept.

	We say that a subinverse semigroup $U \subseteq S$ is \textit{upwardly directed} into $S$ if for all $s \in S$ and $u \in U$ then $s \wedge u$ being defined implies that there exists $v \in U \cup \lbrace 1 \rbrace$ such that $s,u \leq v$.

\begin{theorem} \label{thm:StephensAmalgam}
	Let $S_1$ and $S_2$ be E-unitary inverse semigroups and let $U$ be an inverse subsemigroup of $S_1$ and $S_2$.
If $U$ is upwardly directed into $S_1$ and $S_2$ then $S_1 \ast_U S_2$ is E-unitary.
\end{theorem}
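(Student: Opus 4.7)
The plan is to prove E-unitarity of $S := S_1 \ast_U S_2$ by showing that $\sigma = \sim$ on $S$, i.e.\ that any element $\sigma$-equivalent to $1$ is idempotent. Note that the maximal group image of $S$ is $G_1 \ast_{\hat U} G_2$, where $G_i$ is the maximal group image of $S_i$ and $\hat U$ is the common image of $U$, so an element of $S$ is $\sigma$-equivalent to $1$ exactly when its normal form trivialises in this amalgamated free product of groups.

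First I would set up a normal form for $S$ analogous to the Schreier normal form in the group case: every element can be written as an alternating product $a_1 a_2 \cdots a_n$ with $a_i \in S_{\varepsilon_i} \setminus U$ (with $\varepsilon_i \in \{1,2\}$ strictly alternating), together with a leading idempotent from $U$ to keep track of the domain. Because each $S_i$ is E-unitary, Lemma \ref{lem:equivPOconds} and Lemma \ref{lem:meetIFFcompatible} give a workable calculus of meets inside each factor, which I can transfer into the amalgam once the normal form is in place.

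The core of the argument is an induction on the length $n$ of the normal form of an element $s$ with $s \sigma 1$. The base case $n \leq 1$ is immediate from the E-unitarity of each $S_i$. For the inductive step, triviality in $G_1 \ast_{\hat U} G_2$ forces some consecutive pair $a_i, a_{i+1}$ to have images in $\hat U$ in their respective group images; by E-unitarity of $S_{\varepsilon_i}$ this means $a_i$ is $\sim$-related to an element of $U$ in $S_{\varepsilon_i}$, so by Lemma \ref{lem:meetIFFcompatible} a meet $a_i \wedge u$ with some $u \in U$ exists in $S_{\varepsilon_i}$. This is precisely where the hypothesis of \emph{upward directedness} is invoked: it yields $v \in U \cup \{1\}$ with $a_i, u \leq v$, hence $a_i = a_i a_i^{-1} v$ and the factor $a_i$ can be absorbed into $U$, allowing the adjacent letters $a_{i-1}$ and $a_{i+1}$ to be concatenated across $U$. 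The result is a shorter normal form representing an element still $\sigma$-equivalent to $1$ but differing from $s$ only by a right multiplication by an idempotent. The inductive hypothesis now produces an idempotent, and multiplying back by the idempotent residue shows $s$ itself is idempotent.

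The main obstacle I expect is establishing the normal form theorem for $S_1 \ast_U S_2$ with enough control to make the reduction step valid: unlike the group case, one must carefully track the idempotent prefixes arising at each junction and verify that the rewriting preserves the $\sigma$-class rather than merely the underlying alternating word. A secondary delicate point is ensuring that the element $v \in U \cup \{1\}$ produced by upward directedness really lets us identify $a_i$ with a product of the form $e \cdot u'$ for $u' \in U$ and $e$ idempotent, so that the absorption step is a genuine equality in $S$. Once these two points are under control, the induction closes and E-unitarity of $S_1 \ast_U S_2$ follows.
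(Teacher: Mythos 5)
The paper does not actually prove this statement: it is quoted verbatim from Stephen \cite{Step98}, so there is no in-paper argument to measure your proposal against, and your sketch has to stand on its own. As it stands it has a genuine gap, located exactly where you suspect. The normal form you posit for $S_1 \ast_U S_2$ --- an alternating product $a_1 a_2 \cdots a_n$ with $a_i \in S_{\varepsilon_i} \setminus U$ plus a single leading idempotent from $U$ --- is not a theorem and is not true in this simple form. Unlike group amalgams, amalgamated free products of inverse semigroups have a delicate structure (this is the content of Stephen's and Bennett's work via Sch\"utzenberger automata); idempotents of the two factors interact along the whole word, not just at a distinguished prefix, and establishing any workable canonical form is the bulk of the difficulty, not a preliminary step one can ``set up'' and then use.

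The reduction step also does not close. When $\sigma(a_i)$ lands in the image of $U$, E-unitarity of $S_{\varepsilon_i}$ and Lemma \ref{lem:meetIFFcompatible} do give $a_i \sim u$ for some $u \in U$, and upward directedness gives $v \in U \cup \{1\}$ with $a_i, u \leq v$, hence $a_i = (a_i a_i^{-1}) v$. But the idempotent $a_i a_i^{-1}$ lies in $S_{\varepsilon_i}$, not in $U$, so after pushing $v$ into the neighbouring syllable you are left with a stray idempotent of $S_{\varepsilon_i}$ sandwiched between $a_{i-1}$ and the merged syllable; the number of syllables does not drop and the induction on length does not go through without substantial further bookkeeping --- precisely the bookkeeping that a genuine normal-form theorem for the amalgam would have to supply. (A smaller point: the maximal group image of $S_1 \ast_U S_2$ is the pushout of $G_1 \leftarrow G(U) \rightarrow G_2$ in groups, by left adjointness, but the images of $U$ in $G_1$ and $G_2$ need not be isomorphic or embedded, so writing it as an amalgamated free product $G_1 \ast_{\hat U} G_2$ and invoking the classical normal form theorem there also needs justification.) To repair the argument you would essentially have to reconstruct the structure theory of inverse semigroup amalgams, which is what Stephen's cited paper does; the shortcut as proposed does not work.
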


	In this paper we will be restricting our use of this result to inverse monoids rather than the broader class of subsemigroups.
We may do so because the result is not specific to presentation and any finitely presented inverse monoid may be finitely presented as an inverse semigroup.
This is the case because an inverse monoid with the presentation $\invPres$ has the following inverse semigroup presentation
\begin{equation*}
	\operatorname{InvSem} \langle X, e \mid R^\prime, e^2 = e, xe = x, x = ex \, (x \in X) \rangle
\end{equation*}
where $R^\prime$ is the result of replacing any relations of the form $r = 1$ with $r = e$.

We will now demonstrate that certain classes of inverse submonoids are upwardly directed into their inverse monoids.

\begin{lemma}	\label{lem:upDirConditions}
	Let $M$ be an E-unitary inverse monoid and $N$ be an inverse submonoid of $M$.
If any of the following hold:
\begin{enumerate}
	\item If $N \subseteq U_M$;
	\item If $N \subseteq E_M$;
	\item If $N = \subInv{r} \leq M$ where $r \in R_M$;
	\item If $N = \subInv{l} \leq M$ where $l \in L_M$.
\end{enumerate}
then $N$ is upwardly directed into $M$.
\end{lemma}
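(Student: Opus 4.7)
The common thread is Lemma~\ref{lem:meetIFFcompatible}: in the E-unitary monoid $M$, the hypothesis that $s \wedge u$ exists is equivalent to $s \sim u$, which means $s u^{-1}$ and $u^{-1} s$ are idempotents. For each case I will exhibit an explicit $v \in N \cup \{1\}$ and verify $s,u \leq v$ using Lemma~\ref{lem:equivPOconds}.

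\textbf{Cases 1 and 2.} For $N \subseteq U_M$, take $v = u$. Because $u u^{-1} = 1$, setting $e = u^{-1} s$ (an idempotent, by $s \sim u$) gives $u e = u u^{-1} s = s$, so $s \leq u$; of course $u \leq u$, and $u \in N$. For $N \subseteq E_M$, take $v = 1$. Every idempotent satisfies $u \leq 1$, so $u \leq 1$; and since $M$ is E-unitary with $\sigma = \sim$ and $u \sigma 1$, we get $s \sigma 1$, hence $s \in E_M$, hence $s \leq 1$.

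\textbf{Case 3 (the main case).} Using only $r r^{-1} = 1$ (we do \emph{not} have $r^{-1} r = 1$), every element of $\subInv{r}$ can be rewritten into the normal form $r^{-m} r^n$ with $m,n \geq 0$ by repeatedly cancelling adjacent $r r^{-1}$ subwords. Write $u = r^{-m} r^n$ and set $k := n - m$, and propose $v = r^k \in \subInv{r}$ (with the convention $r^{-j} = (r^{-1})^j$ for $j > 0$). First, $u \leq r^k$: if $n \geq m$ then $u = (r^{-m} r^m) r^{k}$ with $r^{-m} r^m$ idempotent, while if $n < m$ then $u = r^{k}(r^{-n} r^{n})$ with $r^{-n} r^n$ idempotent; in either case Lemma~\ref{lem:equivPOconds} gives $u \leq r^k$. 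Consequently $u \sim r^k$, and by transitivity of $\sim$ in the E-unitary monoid $M$ we get $s \sim r^k$. Now I use the right-unit property to finish: if $k \geq 0$, then $r^k r^{-k} = 1$ and $r^{-k} s$ is idempotent, so $s = r^k (r^{-k} s) \leq r^k$; if $k < 0$, then $r^{-k} r^{k} = r^{|k|} r^{-|k|} = 1$ and $s r^{-k}$ is idempotent, so $s = (s r^{-k}) r^{k} \leq r^k$. Either way $s \leq v = r^k$, as required.

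\textbf{Case 4} is the formal dual of Case 3, using $l^{-1} l = 1$ in place of $r r^{-1} = 1$: elements of $\subInv{l}$ admit the mirror normal form $l^{n} l^{-m}$, and $v = l^k$ with $k = n-m$ works by a symmetric argument swapping left and right multiplication.

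\textbf{Main obstacle.} The only non-cosmetic step is the structural analysis in Case 3: one must simultaneously exploit the \emph{one-sided} cancellation $r r^{-1} = 1$ to (a) put elements of $\subInv{r}$ into a workable normal form, (b) locate a concrete upper bound inside $\subInv{r}$ for the given $u$, and (c) promote compatibility $s \sim r^k$ to the order relation $s \leq r^k$ for an \emph{arbitrary} $s \in M$, which is where the sign-of-$k$ split becomes essential.
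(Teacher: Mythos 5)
Your proof is correct and takes essentially the same approach as the paper's: cases (1) and (2) via compatibility, transitivity and E-unitarity, and case (3) by putting elements of $\subInv{r}$ into the normal form $r^{-m}r^{n}$ using $rr^{-1}=1$, taking $r^{k}$ as the upper bound, and upgrading $s \sim r^{k}$ to $s \leq r^{k}$ via one-sided invertibility, with case (4) dual. Your explicit split on the sign of $k$ is, if anything, slightly more careful than the paper's corresponding step.
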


\begin{proof}

	Suppose that $N \subseteq U_M$.
Let $m \in M$ and $n \in N$.
By Lemma \ref{lem:meetIFFcompatible} we know that if $m \wedge n$ is defined then it is equal to both $nn^{-1}m$ and $mm^{-1}n$.
As $n$ is a unit the former is equal to $m$ and so we may say that $m = mm^{-1}n$.
Thus $m \leq n$ as $mm^{-1}$ is idempotent.
Therefore as $n,m \leq n \in N$ we have shown that $N$ is upwardly directed into $M$.

	Suppose that $N \subseteq E_M$.
Let $m \in M$ and $n \in N$ be such that $m \wedge n$ is defined.
As $n$ is an idempotent it may easily be deduced that $1 \sim n$.
Moreover as the meet of $m$ and $n$ is defined this means by Lemma \ref{lem:meetIFFcompatible} we have that they are compatible.
Thus we may write $m \sim n$.
As $M$ is E-unitary, compatibility is transitive and so $1 \sim m$ which implies that $m \in E_M$.
Therefore $m,n \leq 1 \in N$ and so $N$ is upwardly directed into $M$.

	Suppose that $N = \subInv{r} \leq M$ where $r \in R_M$.
Let $n \in N$ and $m \in M$ be such that $m \wedge n$ is defined.
As $n \in \lbrace r, r^{-1} \rbrace^{\ast}$ and $r r^{-1} =1$, $n$ must be of the form $r^{-k_1}r^{k_2}$ for some $k_i \geq 0$.
Moreover as $r^{-i}r^i$ is idempotent for all $i$, we may write $n = e r^k$ for some $k$ and idempotent $e \in N$.
It may be then be seen that $n \sim r^k$.
By Lemma \ref{lem:meetIFFcompatible} the existence of $m \wedge n$ means that $m \sim n$.
Thus $m \sim r^k$ and may be written $f r^k$ for some idempotent $f \in N$ (as $r^k \in R_M$ is maximal in the partial order by $R_M \cap E_M = \lbrace 1 \rbrace$).
Therefore $m,n \leq r^k \in N$ and so $N$ is upwardly directed into $M$.

	The proof for $N = \subInv{l} \leq M$ for $l \in L_M$ is dual to the above.

\end{proof}

	Using the first of these cases we will now produce a theorem which will prove useful later in the construction of examples.

\begin{theorem}		\label{thm:EuAmalByUM}
	Let $M = M_1 \ast_N M_2$ where $M_1$ and $M_2$ are E-unitary inverse monoids and $N$ is an inverse submonoid of both $M_1$ and $M_2$.
If $N \subseteq U_{M_1} \cap U_{M_2}$ then $M$ is E-unitary.
Further if $M_1$ and $M_2$ have finite special presentations and $N$ is finitely generated then there is a finite special presentation of $M$.
\end{theorem}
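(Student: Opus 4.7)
The plan is to deduce the E-unitarity directly from the machinery developed in this section, then handle the presentational claim by rewriting the amalgamation relations as special relations using the unit hypothesis.

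For the E-unitarity, I would first apply Lemma \ref{lem:upDirConditions} case 1 to the inclusion $N \subseteq U_{M_1}$ to conclude that $N$ is upwardly directed into $M_1$, and likewise to $N \subseteq U_{M_2}$ to conclude that $N$ is upwardly directed into $M_2$. With both upward-directedness conditions in hand, Theorem \ref{thm:StephensAmalgam} applies (working in the inverse semigroup category via the standard special-presentation-to-inverse-semigroup translation noted after that theorem) and immediately gives that $M_1 \ast_N M_2$ is E-unitary. This part is essentially a bookkeeping exercise once the previous lemma and theorem are in place.

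For the finite special presentation, write $M_j = \invPres[r_i = 1 \, (i \in I_j)][X_j]$ for $j = 1, 2$, where $X_1 \cap X_2 = \emptyset$ and each $I_j$ is finite. Take a finite generating set $n_1, \ldots, n_k$ for the inverse submonoid $N$, and express each $n_\ell$ as a word $u_\ell \in \freemon[X_1]$ (using that $N \le M_1$) and as a word $v_\ell \in \freemon[X_2]$ (using that $N \le M_2$); the standard amalgamated presentation for $M_1 \ast_N M_2$ is then
\begin{equation*}
M = \invPres[r_i = 1 \, (i \in I_1 \cup I_2), \ u_\ell = v_\ell \, (1 \leq \ell \leq k)][X_1 \cup X_2].
\end{equation*}
The only non-special relations are the amalgamating ones. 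The key observation is that because $n_\ell \in U_{M_1} \cap U_{M_2}$, the word $v_\ell$ represents a unit in $M_2$, so $v_\ell v_\ell^{-1} = 1 = v_\ell^{-1} v_\ell$ holds in $M$. Hence the relation $u_\ell = v_\ell$ implies $u_\ell v_\ell^{-1} = v_\ell v_\ell^{-1} = 1$, while conversely from $u_\ell v_\ell^{-1} = 1$ one recovers $u_\ell = u_\ell v_\ell^{-1} v_\ell = v_\ell$. Replacing each $u_\ell = v_\ell$ by $u_\ell v_\ell^{-1} = 1$ therefore yields an equivalent, and now finite special, presentation of $M$.

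The main potential obstacle I anticipate is making sure the rewriting in the last step is legitimate: the equivalence $u_\ell = v_\ell \Leftrightarrow u_\ell v_\ell^{-1} = 1$ only works because $v_\ell$ (equivalently $u_\ell$) is a two-sided unit, and this in turn depends only on relations already present among the $r_i$, so no circularity arises. Beyond that, one should be slightly careful that $N$ being finitely generated as an inverse submonoid is what is needed (the generating words can be taken freely in $\freemon[X_j]$), but this causes no trouble since inverse submonoids are closed under inversion of generators, so any finite inverse-monoid generating set suffices.
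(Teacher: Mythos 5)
Your proposal is correct and follows essentially the same route as the paper: E-unitarity via Lemma \ref{lem:upDirConditions} (case 1) combined with Theorem \ref{thm:StephensAmalgam}, and the finite special presentation by writing out the amalgamated presentation and replacing each relation $u_\ell = v_\ell$ with $u_\ell v_\ell^{-1} = 1$, justified by the invertibility of the $v_\ell$. Your extra care over the two-way equivalence and the non-circularity of the unit hypothesis simply makes explicit what the paper states in one line.
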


\begin{proof}

	That $M$ is E-unitary follows as a natural consequence of Theorem \ref{thm:StephensAmalgam} and Lemma \ref{lem:upDirConditions}.

	Suppose that $M_1$ and $M_2$ are finitely presented and $N$ is finitely generated.
Let $M_1 = \invPres[r_i = 1 (i \in I)][X_1]$ and $M_2 = \invPres[s_j = 1 (j \in J)]$.
Further let $N$ be generated by $u_1, \ldots u_k$ in $M_1$ and correspondingly by $v_1, \ldots, v_k$ in $M_2$.
Then $M$ may be written
\begin{equation*}
	\invPres[r_i = 1 (i \in I), s_j = 1 (j \in J), u_1 = v_1, \ldots, u_k = v_k][X_1, X_2]
\end{equation*}
As $N \subseteq U_{M_2}$, the $v_i$ are invertible and so $u_i = v_i$ if and only if $u_i v_i^{-1} = 1$.
Consequently we have
\begin{equation*}
	M = \invPres[r_i = 1 (i \in I), s_j = 1 (j \in J), u_1 v_1^{-1} = 1, \ldots, u_k v_k^{-1} = 1][X_1, X_2]
\end{equation*}
which is a finite special presentation.

\end{proof}

	Let $M = \invPres[r_i(u_1, \ldots, u_k) = 1 \, (i \in I)]$ be an inverse monoid, we say that the factorisation of the $r_i$ into $u_j$ is \textit{unital} when each $u_j$ is a unit in $M$ (we take our lead here from the language of Dolinka and Gray \cite{DolGra21}).
We call this a factorisation into \textit{minimal invertible pieces} if no factor has a non-trivial prefix which is invertible in the monoid.
There are a number of methods for determining a unital factorisations, the simplest of which is iteratively finding words which are both prefixes and suffixes of known units starting with the relators.
However no method is currently known is guaranteed to produce a decomposition into minimal invertible pieces, even in the one-relator case, for more on this see \cite{Lal74},\cite{GraRus23} and \cite{CFNB22}.
We can use this notion of unital factorisations to find applications for Theorem \ref{thm:EuAmalByUM}.

\begin{example}		\label{exm:EuForUMLexample}
	Consider the inverse monoid
\begin{align*}
	M = \Inv \langle 
	x_1,y_1,z_1,x_2,y_2,z_2
	\mid
	(z_1)(x_1^2y_1)^2(z_1)=1,
	(z_2)(x_2^2&y_2)^2(z_2)=1, \\
	& (z_1)(z_2)^{-1}=1
	\rangle
\end{align*}
for which the brackets mark a unital factorisation.
This is the product of two inverse monoids $M_i = \invPres[(z_i)(x_i^2y_i)^2(z_i) = 1][x_i,y_i,z_i]$ (for $i=1,2$) amalgamated over $z_1 = z_2$.
As $M_1$ and $M_2$ are copies of each other the inverse submonoids generated by $z_1$ and $z_2$ are clearly isomorphic (and finitely generated).
Moreover as each $M_i$ is defined by a single cyclically reduced relator we know that they are both E-unitary by Theorem \ref{thm:cycRedIsEu}.
Therefore by Theorem \ref{thm:EuAmalByUM} $M$ is E-unitary.
\end{example}

\begin{example}		\label{exm:EuForDAexample}
	For $i=1,2$, let
\begin{equation*}
	M_i = \invPres[(a_i^2 b_i^3) (c_i^5)^2 (a_i^2 b_i^3) (c_i^5) (a_i^2 b_i^3) = 1][a_i, b_i, c_i]
\end{equation*}
be two inverse monoids.
Then we can find that $u_i \equiv a_i^2 b_i^3$ and $v_i \equiv c_i^5$ are unital pieces.
Therefore we can show that
\begin{align*}
	M = \Inv \langle a_1, b_1, c_1, a_2, b_2, c_2 \mid
	(a_1^2 b_1^3) (c_1^5)^2 (a_1^2 b_1^3) (c_1^5) (a_1^2 b_1^3) & = 1, \\
	(a_2^2 b_2^3) (c_2^5)^2 (a_2^2 b_2^3) (c_2^5) & (a_2^2 b_2^3) = 1, \\
	& (a_1^2 b_1^3) (a_2^2 b_2^3)^{-1} =1 \rangle
\end{align*}
is E-unitary in a similar manner to above.
\end{example}

\section{Properties of the Prefix Monoid}

	There is a strong link between the right units of a special inverse monoid and the prefix monoid of the maximal group image.
This link is even stronger when the monoid in question is E-unitary.

\begin{lemma}	\label{lem:sigmaRMisP}

	Let $M = \invPres[s_i = 1 \, (i \in I)]$ and $G = \gpPres[s_i = 1 \, (i \in I)]$ be an inverse monoid and its corresponding maximal group image.
Further let $\sigma$ be the map from $M$ to $G$ induced by identity on $\overline{X}$.

	If $P$ is the prefix monoid of $G$ then $P = \sigma(R_M)$ and furthermore if  $M$ is E-unitary then $\sigma^{-1}(P) = R_M \cdot E_M = R_M \cdot E_M$.

\end{lemma}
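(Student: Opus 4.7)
The plan is to establish the two equalities separately, with the first equality $P = \sigma(R_M)$ being the heart of the matter. This first equality actually holds for any special inverse monoid, without needing E-unitarity; the E-unitary hypothesis only enters in the second equality. For $P \subseteq \sigma(R_M)$, I would observe that $R_M$ is a submonoid of $M$ (products of right units are right units), so $\sigma(R_M)$ is a submonoid of $G$. It then suffices to show that every prefix $p$ of a relator $s_i \equiv pq$ lies in $\sigma(R_M)$. Since $pq = s_i = 1$ in $M$, a direct inverse-monoid computation gives $pp^{-1} = pp^{-1} \cdot 1 = pp^{-1}pq = pq = 1$, so $p \in R_M$ and hence $\sigma(p) \in \sigma(R_M)$; this propagates from the generators to all of $P$.

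For the reverse inclusion $\sigma(R_M) \subseteq P$, I would rely on the standard result (implicit in Ivanov--Margolis--Meakin and typically proved via Stephen's iterative construction of Sch\"utzenberger graphs) that every right unit of a finitely presented special inverse monoid $\invPres[s_i = 1]$ is representable in $M$ as a product of prefixes of the $s_i$. Granting this, any $r \in R_M$ equals some product $p_1 \cdots p_\ell$ of prefixes in $M$; applying $\sigma$ gives the same equality in $G$, so $\sigma(r) \in P$. I expect this is where the main obstacle lies: the result is folklore but the write-up needs either an explicit citation or a short indication of why it holds, since the argument is non-trivial and drives the whole statement.

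For the second equality, assume $M$ is E-unitary. The inclusion $R_M \cdot E_M \subseteq \sigma^{-1}(P)$ (and dually $E_M \cdot R_M \subseteq \sigma^{-1}(P)$) is immediate from part one, since $\sigma$ kills idempotents: $\sigma(re) = \sigma(r) \in P$. For the reverse inclusion, take $m \in \sigma^{-1}(P)$; by part one there is $r \in R_M$ with $\sigma(m) = \sigma(r)$. Since $M$ is E-unitary, $\sigma$ coincides with the compatibility relation, so $m \sim r$, and by Lemma \ref{lem:meetIFFcompatible} the meet $m \wedge r$ exists and equals $rr^{-1}m$. Because $r$ is a right unit this simplifies to $m$, giving $m \leq r$. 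Lemma \ref{lem:equivPOconds} then writes $m = re$ for some $e \in E_M$ and dually $m = fr$ for some $f \in E_M$, yielding both $m \in R_M \cdot E_M$ and $m \in E_M \cdot R_M$ simultaneously, which closes all the required inclusions.
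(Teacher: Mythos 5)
Your proposal is correct and follows essentially the same route as the paper: both inclusions of $P = \sigma(R_M)$ rest on the same facts (prefixes of relators are right units in a special inverse monoid, plus the Ivanov--Margolis--Meakin result that every right unit is a product of prefixes), and the E-unitary part reduces, as in the paper, to finding $r \in R_M$ with $\sigma(m) = \sigma(r)$. The only difference is cosmetic: where you pass through compatibility, the meet, and Lemma~\ref{lem:equivPOconds} to obtain $m = re$ and $m = fr$ simultaneously, the paper deduces $r^{-1}m \in E_M$ directly from E-unitarity and then separately proves $N \cdot E_M = E_M \cdot N$ for an arbitrary subset $N$.
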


\begin{proof}

	Let $\tau$ and $\pi$ be the natural maps from $\freemon$ to $M$ and $G$ respectively.

	Let $p \in P$, by definition of the prefix monoid this means that we may find some $w_p \equiv w_1 w_2 \ldots w_k$ such that $\pi(w_p) = p$ and each $w_j$ is a prefix of one of the $s_i$.
As prefixes of the relators are right invertible in a special monoid we also know that each $\tau(w_j)$ is right invertible in $M$.
As the product of right invertible is right invertible we have that $\tau(w_p) \in R_M$.
Therefore $p = \sigma( \tau(w_p) ) \in \sigma( R_M )$ and so $P \subseteq \sigma( R_M )$.

	Let $m \in R_M$.
By a geometric argument of Ivanov, Margolis and Meakin \cite[Paragraph 2 of Lemma 4.2]{IMM01},  every right unit is a product of prefixes.
Therefore we may find some $w_m \equiv w_1 w_2 \ldots w_k$ such that $\tau(w_m) = m$ and each $w_j$ is a prefix of one of the $s_i$.
This means that
\begin{equation*}
	\sigma (m) = \sigma( \tau (w_m) )
	= \sigma( \tau (w_1 w_2 \ldots w_k) )
	= \sigma( \tau (w_1) ) \sigma( \tau (w_2) ) \ldots \sigma( \tau (w_k) ) \in P
\end{equation*}
as each $\sigma( \tau(w_i))$ will be a generator of $P$.
Therefore $\sigma(R_M) \subseteq P$ and so $P = \sigma(R_M)$.

	We now suppose that $M$ is E-unitary.

	Let $m \in \sigma^{-1}(P)$.
By the prior result this means that $m \in \sigma^{-1}( \sigma(R_M) )$.
Therefore there is some $r \in R_M$ such that $\sigma(m) = \sigma(r)$.
It follows that $\sigma(r^{-1} m) = \sigma(1)$ and, as $M$ is E-unitary, that $r^{-1} m \in E_M$.
Therefore, as $r$ is right invertible, $m = r \cdot r^{-1} m \in R_M \cdot E_M$ and so $\sigma^{-1}(P) \subseteq R_M \cdot E_M$.

	Let $m \in R_M \cdot E_M$, then there exist $e \in E_M$ and $r \in R_M$ such that $m = er$.
By the E-unitarity of $M$ we may see that $\sigma(m) = \sigma(re) = \sigma(r)$ and so $m \in \sigma^{-1} ( \sigma( R_M ) )$ which means by the above that $m \in \sigma^{-1}( P )$.
Therefore $R_M \cdot E_M \subseteq \sigma^{-1}(P)$ and so $\sigma^{-1}(P) = R_M \cdot E_M$.

	We will now prove that for any subset $N \subseteq M$ we have $N \cdot E_M = E_M \cdot N$.
Let $m \in N \cdot E_M$, this means there are some $n \in N$ and $e \in E_M$ such that $m = ne$.
It follows that $m = nn^{-1}ne$, further as $n^{-1}n$ is an idempotent it commutes with $e$ giving us $m = nen^{-1}n$.
The element $nen^{-1}$ is also idempotent and so $m = (nen^{-1})n \in E_M \cdot N$.
This means that $N \cdot E_M \subseteq E_M \cdot N$.
That $E_M \cdot N \subseteq N \cdot E_M$ follows by a dual argument.

	Hence $R_M \cdot E_M = E_M \cdot R_M$.

\end{proof}

\begin{remark}
	If the word problem for $M$ is decidable then deciding membership in both $E_M$ and $R_M$ is an easy consequence.
However it is not clear whether deciding membership in $E_M \cdot R_M$ is also a consequence.
This is notable as by the above result saying whether it was would be equivalent to answering the open question \cite{HLM10} of whether being able to decide the prefix membership problem is a necessary condition to deciding the word problem of $M$ when $M$ is E-unitary.
\end{remark}

	Let $G = \gpPres[r_i(u_1, \ldots, u_k) = 1 \, (i \in I)]$ be a group.
Further let $V \subseteq \overline{\lbrace u_1, \ldots, u_k \rbrace}$ be minimal such that $r_i \in V^\ast$ for every $i \in I$.
The factorisation into $u_1, \ldots, u_k$ is said to be \textit{conservative} in $G$ if
\begin{equation*}
	\subMon{\bigcup_{i \in I}  \pref(r_i) } 
	= \subMon{\bigcup_{v \in V} \pref(v) }
	\leq G
\end{equation*}
that is, if the submonoid generated by the prefixes of the factors is equal to the prefix monoid, the monoid generated by the prefixes of the relators (similarly to unital, we are apply the label conservative to a factorisation in line with how it was used by Dolinka and Gray in \cite{DolGra21}).

	It may be noticed that we were particular about whether a certain factor appeared in its positive or negative form in our definition but that we use the phrase a conservative factorisation into $u_1, u_2, \ldots, u_k$ without reference to whether their inverses are or aren't used.
The reason for that may be seen in the next lemma.

\begin{lemma}	\label{lem:prefixMonConFactorForm}
	Let $G = \gpPres[r_i(u_1, \ldots, u_k) = 1 \, (i \in I)]$ be a group and let the set $\Gamma \subseteq \lbrace (j,\varepsilon) \, | \, 1 \leq j \leq k, \varepsilon \in \lbrace -1, 1 \rbrace \, \rbrace$ be minimal such that 
$r_i \in U^\ast$, for every $i \in I$,
where $U = \lbrace u_j^\varepsilon \mid (j, \varepsilon) \in \Gamma \rbrace$.
Then
\begin{equation}	\label{eqn:facForms}
	\subMon{ \bigcup_{(i, \varepsilon) \in \Gamma} \pref(u_j^{\varepsilon})}
	= \subMon{ \bigcup_{1 \leq j \leq k} \bigcup_{\varepsilon \in \lbrace -1, 1 \rbrace} \pref(u_j^{\varepsilon})} \leq G
\end{equation}
Consequently, if the $r_i$ have a conservative factorisation into $u_1, u_2, \ldots, u_k$ then
\begin{equation*}
	P = \subMon{ \bigcup_{1 \leq j \leq k} \bigcup_{\varepsilon \in \lbrace -1, 1 \rbrace} \pref(u_j^{\varepsilon})} \leq G
\end{equation*}
is equal to the prefix monoid of $G$.

\end{lemma}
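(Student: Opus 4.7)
The plan is to prove $(1)$ first, then read off the consequence by matching definitions. The containment $\subseteq$ in $(1)$ is immediate since every generator on the left also appears on the right. For the reverse, denote the left-hand submonoid by $P_\Gamma$; it suffices to show $\pref(u_j^\varepsilon) \subseteq P_\Gamma$ for every $j$ and every $\varepsilon \in \{-1,1\}$. When $(j,\varepsilon) \in \Gamma$ this is immediate, so by the convention fixed at the start of Section~$2$ (every $u_j$ must appear, positively or negatively, in some $r_i$) I may assume without loss of generality that $(j,1) \in \Gamma$ but $(j,-1) \notin \Gamma$, and aim to show $\pref(u_j^{-1}) \subseteq P_\Gamma$.

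The key step is to place $u_j^{-1}$ itself inside $P_\Gamma$. Minimality of $\Gamma$ prevents removing $(j,1)$, so some $r_i$ expressed over $U$ genuinely uses the letter $u_j$: write $r_i = v_1 v_2 \cdots v_m$ with each $v_{\ell'} \in U$ and some $v_\ell = u_j$. The relation $r_i = 1$ in $G$ then yields
\begin{equation*}
u_j^{-1} = v_{\ell+1} \cdots v_m \, v_1 \cdots v_{\ell-1},
\end{equation*}
a product of elements of $U$, each of which is a full prefix of itself and therefore an element of $P_\Gamma$.

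Now for an arbitrary $p \in \pref(u_j^{-1})$, writing $u_j^{-1} \equiv pq$ in $\freemon$ gives $u_j \equiv q^{-1} p^{-1}$, so $q^{-1}$ is a prefix of $u_j$ and hence lies in $P_\Gamma$; in $G$ we therefore have $p = u_j^{-1} q^{-1} \in P_\Gamma$, establishing $(1)$. For the consequence, observe that $\Gamma$ merely indexes the set $V$ appearing in the definition of a conservative factorisation, so conservativity gives $P = \subMon{\bigcup_{(j,\varepsilon)\in\Gamma} \pref(u_j^\varepsilon)}$, which $(1)$ then rewrites in the form stated. The delicate point throughout is the interplay between word-level factorisations in $\freemon$ (which let me recover prefixes of $u_j^{-1}$ from prefixes of $u_j$ via the identity $u_j \equiv q^{-1}p^{-1}$) and group-level equalities in $G$ (which extract $u_j^{-1}$ from a defining relation); this is where the minimality hypothesis on $\Gamma$ is essential.
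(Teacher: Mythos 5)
Your proof is correct and follows essentially the same route as the paper's: obtain $u_j^{-1}$ (for the missing signed form) as a cyclic rearrangement of a relator expressed over $U$, then write each prefix $p$ of $u_j^{-1}$ as $u_j^{-1}q^{-1}$ with $q^{-1}\in\pref(u_j)$, and read off the conservative case from the definition. Your appeal to minimality of $\Gamma$ to guarantee that $u_j$ genuinely occurs in some expression of a relator over $U$ is, if anything, a slightly more careful justification of the step the paper takes via the convention that every factor appears.
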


\begin{proof}
	Let $Q$ be equal to the submonoid on the left hand side of equation (\ref{eqn:facForms}) and suppose there is some $j$ such that $u_j^{-1} \notin U$.
As each $u_j$ must appear in either its positive or its negative form at least once, there must be some $r_i$ such that $r_i \equiv v_p u_j v_s$ where $v_p,v_s \in U^\ast$.
In $G$, $r_i = 1$ which implies that $u_j^{-1} = v_s v_p$.
Thus $u_j^{-1} \in Q$ in $G$ and so $\pref(u_j^{-1}) = u_j^{-1} \cdot \pref(u_j) \in Q$.
Dually in any case where $u_j$ does not appear in the factorisation we may show that $\pref(u_j) \in Q$.

	This shows that the right hand side of equation (\ref{eqn:facForms}) is a subset of the left.
The reverse is obvious and so we have shown equality.
The second part is then immediate from the definition of a conservative factorisation.

\end{proof}

	The next result establishes a connection between unital and conservative factorisations of special presentations, extrapolating the result of \cite{DolGra21} from the one-relator case to finite presentations.

\begin{theorem}	\label{thm:UFisCF}

	Let $M = \invPres[r_i(u_1, u_2, \ldots, u_k) = 1 \, (i \in I)]$ be an inverse monoid and $G = \gpPres[r_i(u_1, u_2, \ldots, u_k) = 1 \, (i \in I)]$ be its maximal group image.
If the factorisation of the $r_i$ is unital in $M$ then it is conservative in $G$.
Furthermore if $M$ is E-unitary then it is also true that the factorisation of the $r_i$ being conservative in $G$ implies it is a unital factorisation in $M$.

\end{theorem}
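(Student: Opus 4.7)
The plan is to prove the two implications separately. For the \emph{unital implies conservative} direction, the containment $\subMon{\bigcup_i \pref(r_i)} \subseteq \subMon{\bigcup_{j,\varepsilon} \pref(u_j^\varepsilon)}$ in $G$ is immediate from the factorisation structure: every prefix of an $r_i$ splits as a product of full factors followed by a prefix of a single $u_j^\varepsilon$. For the reverse containment I will appeal to Lemma \ref{lem:prefixMonConFactorForm} to reduce to showing that prefixes of those factors $u_j^\varepsilon$ which actually appear in some $r_i$ lie in the prefix monoid $P := \subMon{\bigcup_i \pref(r_i)}$. Given such an appearance $r_i \equiv v_p \, u_j^\varepsilon \, v_s$ with $v_p, v_s$ products of $u_\ell^{\pm 1}$, any prefix $p$ of $u_j^\varepsilon$ yields $v_p p \in P$ as a prefix of $r_i$, while unitality makes $v_p$ a unit in $M$; hence $v_p^{-1} \in U_M \subseteq R_M$, Lemma \ref{lem:sigmaRMisP} places $v_p^{-1}$ in $P$, and so $p = v_p^{-1}(v_p p) \in P$.

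For the converse, fix a factor $u_j$ and aim to show $u_j \in U_M$. Conservativity guarantees that both $u_j$ and $u_j^{-1}$ represent elements of $P$ in $G$, so Lemma \ref{lem:sigmaRMisP} (which requires E-unitarity) places them in $\sigma^{-1}(P) = R_M \cdot E_M$ in $M$. Writing $u_j = r_0 e_0$ and $u_j^{-1} = r_1 e_1$ with $r_0, r_1 \in R_M$ and $e_0, e_1 \in E_M$ yields $u_j \leq r_0$ and $u_j^{-1} \leq r_1$; inverting the latter in the natural partial order gives $u_j \leq r_1^{-1}$ with $r_1^{-1} \in L_M$. Transitivity of $\sim$ in an E-unitary monoid then gives $r_0 \sim r_1^{-1}$, so by Lemma \ref{lem:meetIFFcompatible} the meet $r_0 \wedge r_1^{-1}$ exists. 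The crucial step is to compute this meet in two ways: as $r_0 r_0^{-1} r_1^{-1} = r_1^{-1}$ (using $r_0 r_0^{-1} = 1$) and as $r_0 r_1 r_1^{-1} = r_0$ (using $r_1 r_1^{-1} = 1$). Equating forces $r_0 = r_1^{-1} \in R_M \cap L_M = U_M$.

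To close the argument, inverting $u_j^{-1} \leq r_1 = r_0^{-1}$ gives $u_j \geq r_0$, and combined with $u_j \leq r_0$ this squeezes $u_j = r_0 \in U_M$. I expect the main obstacle to be this second direction, since mere membership $u_j \in R_M \cdot E_M$ is too weak to conclude $u_j \in U_M$; the real content is pairing the bound on $u_j$ coming from its own membership in $R_M \cdot E_M$ with the dual bound coming from $u_j^{-1}$, so that the two meet formulas of Lemma \ref{lem:meetIFFcompatible} collapse $r_0$ and $r_1^{-1}$ to a common element which must then coincide with $u_j$.
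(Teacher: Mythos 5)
Your first direction is sound and is essentially the paper's argument: you strip off the unital prefix $v_p$ (a unit, hence a right unit, whose image lies in $P$ by Lemma \ref{lem:sigmaRMisP}) from the prefix $v_p p$ of $r_i$, and reduce to the factors that actually appear via Lemma \ref{lem:prefixMonConFactorForm}.

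The second direction has a genuine gap at the closing ``squeeze''. In the natural partial order of an inverse monoid, inversion \emph{preserves} the order: if $x = ye$ with $e \in E_M$ then $x^{-1} = y^{-1}(yey^{-1})$, so $x \leq y$ implies $x^{-1} \leq y^{-1}$. Hence from $u_j^{-1} \leq r_1 = r_0^{-1}$ you recover only $u_j \leq r_1^{-1} = r_0$, which you already had --- not $u_j \geq r_0$. You therefore end with $u_j \leq r_0 \in U_M$, and lying below a unit does not make an element a unit (every idempotent lies below $1$). Moreover the strategy cannot be repaired while using only the two memberships $u_j, u_j^{-1} \in \sigma^{-1}(P) = R_M \cdot E_M$: any non-identity idempotent $e$ of an E-unitary inverse monoid satisfies $e, e^{-1} \in R_M \cdot E_M$ yet is not a unit, so those memberships alone can never force unitality. (Your meet computation showing $r_0 = r_1^{-1} \in U_M$ is correct, but, as the idempotent example with $r_0 = r_1 = 1$ shows, it says nothing about $u_j$ itself.) The ingredient you are missing, and which the paper exploits, is that the \emph{leading} factor $v_1$ of a relator $r_i \equiv v_1 v_2 \cdots v_\ell$ is an honest prefix of $r_i$ and hence is already right invertible in $M$. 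Conservativity writes $v_1^{-1}$ in $G$ as a product $p_1 \cdots p_n$ of prefixes of relators; E-unitarity converts the equality $v_1^{-1} = p_1 \cdots p_n$ in $G$ into $p_1 \cdots p_n v_1 \in E_M$; this element is a product of right units, hence a right unit, and the only right invertible idempotent is $1$, so $v_1$ is also left invertible and therefore a unit. One then cycles the relator ($v_2 \cdots v_\ell v_1 = 1$ in $M$) and repeats the argument to obtain invertibility of $v_2, \ldots, v_\ell$, and hence of every factor.
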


\begin{proof}

	Suppose that the factorisation given by the $u_j$ is unital in $M$.
Let $q \in \pref( u_j^{\varepsilon} )$ for some $u_j^{\varepsilon}$ that appears exactly in at least one of the $r_i$.
Such an $r_i$ will have a prefix $p$ of the form $p \equiv w \cdot q$ for some $w \in \overline{ \lbrace u_1, u_2, \ldots, u_k \rbrace}^\ast$.
As $w$ is a concatenation of words which are unital in $M$ it will also be a unit when considered as an element of $M$.
Moreover as $r_i = 1$ we can see $p$ is a right unit, thus $q$ is a right unit in $M$ and so by Lemma \ref{lem:sigmaRMisP} belongs to the prefix monoid.
Therefore for all $1 \leq j \leq k$ and $\varepsilon \in \lbrace -1, 1 \rbrace$ such that $u_j^\varepsilon$ appears exactly in the factorisation the prefixes of $u_j^{\varepsilon}$ belong to the prefix monoid.
By Lemma \ref{lem:prefixMonConFactorForm} this means that the factorisation into $u_j$ is conservative.

	Suppose that $M$ is E-unitary and that the factorisation given by the $u_j$ is conservative in $G$.
Let $r_i$ be an arbitrary relator and $r_i \equiv v_1 v_2 \ldots v_{\ell}$ where $v_j \in \overline{\lbrace u_1, u_2, \ldots, u_k \rbrace}$ and the factorisation of $r_i$ into $v_j$ is in line with the overall factorisation.
First we observe that $r_i = 1$ which means $v_1^{-1} = v_2 \ldots v_{\ell}$ in $G$.
Further as the factorisation is conservative each factor, including the $v_j$, belongs to the prefix monoid.
Thus we have that $v_1^{-1} = p_1 \ldots p_n$ in $G$ where each $p_j$ is a prefix to one of the $r_i$.
As $M$ is E-unitary this means that $v_1^{-1} \sim p_1 \ldots p_n$, which in turn means that $p_1 \ldots p_n v_1 \in E_M$.
This is a product of right invertible elements and so is itself right invertible, and there is only one right invertible idempotent, the identity element.
So $p_1 \ldots p_n v_1 = 1$ in $M$, which means $v_1$ is left invertible and so as we already know it to be right invertible we have shown $v_1$ is invertible.
Hence we deduce that $v_2 v_3 \ldots v_{\ell} v_1 = 1$ and from here we may repeat the process to show that $v_2, \ldots, v_\ell$ are also invertible.
As our choice of $r_i$ was arbitrary this means that we can show any factor word $u_j$ is invertible and therefore the factorisation is unital.

\end{proof}

\section{Factorisations with Unique Marker Letters}

	In this section we will expand a result of Dolinka and Gray \cite[Theorem $5.1$]{DolGra21} which showed that for a conservative factorisation each factor word having a unique marker letter is sufficient to solve the prefix membership problem in the one-relator case.
We will show that by simply imposing the condition that the group has decidable word problem this result also holds for all finitely presented groups.
Before showing the sections main result we will dissect what such a group looks like.

	We call a factorisation \textit{uniquely marked} if the factor words, $u_1, u_2, \ldots, u_k \in \freemon$, are such that each $u_i$ has a corresponding $x_i \in X$ which appears precisely once in $u_i$ and not at all in any $u_j$ where $j \neq i$.
For instance $axbaybb^{-1}x^{-1}a^{-1}$ may be factorised as $(axb)(ayb)(axb)^{-1}$ which is uniquely marked as $ayb$ is marked by $y$ and $axb$ is marked by $x$, note that both the positive and negative forms of $axb$ are allowed to appear.

	We introduce the following lemma describing how a uniquely marked factorisation effects the structure of a group.
It is implicit in the work of Dolinka and Gray \cite{DolGra21} and Gray and Ruskuc \cite{GraRus23}, but for the sake of completeness we include an explicit proof here.

\begin{lemma}	\label{lem:UMLstructure}

	Let $G = \gpPres[r_i(u_1, u_2, \ldots, u_k) \, (i \in I)]$ be a group where $u_1, u_2, \ldots, u_k \in \freemon$ are a set of uniquely marked words.
Then $G$ is isomorphic to the free product of $H = \gpPres[r_i(z_1, z_2, \ldots, z_k) \, (i \in I)][z_1, z_2, \ldots, z_k]$ and a free group.
Furthermore $H$ is isomorphic to $\subGp{u_1, u_2, \ldots, u_k} \leq G$ under the mapping that sends $z_i$ to $u_i$.

\end{lemma}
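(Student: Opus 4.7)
The plan is to establish the structural claim via a sequence of Tietze transformations that replaces each factor $u_i$ by a fresh generator $z_i$ and then eliminates the marker letters $x_1, \ldots, x_k$ entirely.

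First I would add new generators $z_1, \ldots, z_k$ subject to the defining relations $z_i = u_i$, obtaining the equivalent presentation
\begin{equation*}
\gpPres[r_i(u_1, \ldots, u_k) = 1 \, (i \in I), \, z_j = u_j \, (1 \leq j \leq k)][X, z_1, \ldots, z_k].
\end{equation*}
Because $r_i(u_1, \ldots, u_k)$ is obtained from $r_i(z_1, \ldots, z_k)$ by the substitution $z_j \mapsto u_j$, the relators $r_i(u_1, \ldots, u_k) = 1$ are consequences of the relations $z_j = u_j$ together with $r_i(z_1, \ldots, z_k) = 1$, so by Tietze they may be replaced by the latter.

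Next I would use the unique marker property to eliminate each $x_i$. Since $x_i$ occurs exactly once in $u_i$ and not at all in any $u_j$ with $j \neq i$, the word $u_i$ factorises as $u_i \equiv a_i \, x_i^{\varepsilon_i} \, b_i$ with $\varepsilon_i \in \{-1, 1\}$ and, crucially, $a_i, b_i$ words over $Y := X \setminus \{x_1, \ldots, x_k\}$ (no other marker letter can appear in $u_i$). The relation $z_i = u_i$ therefore lets us solve for $x_i$ as a word $w_i \in \freemon[Y \cup \{z_i\}]$ that involves no marker letter at all. Consequently the $k$ substitutions $x_i \mapsto w_i$ can be carried out simultaneously by Tietze transformations without any interference, and the resulting presentation is
\begin{equation*}
\gpPres[r_i(z_1, \ldots, z_k) = 1 \, (i \in I)][Y, z_1, \ldots, z_k],
\end{equation*}
whose defining relations involve only the $z_j$. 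This is manifestly a presentation of the free product $H \ast \FG(Y)$, yielding the first claim.

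For the second claim, the element represented by $z_i$ in $G$ is $u_i$ throughout the Tietze transformations, so under the isomorphism $G \cong H \ast \FG(Y)$ the subgroup $\subGp{u_1, \ldots, u_k} \leq G$ corresponds to $\subGp{z_1, \ldots, z_k}$ inside $H \ast \FG(Y)$. By the standard normal form theorem for free products the canonical map $H \hookrightarrow H \ast \FG(Y)$ is injective with image $\subGp{z_1, \ldots, z_k}$, so this subgroup is isomorphic to $H$ via the map $z_i \mapsto u_i$, as claimed. The only delicate point in the whole argument is verifying that $w_i$ contains no marker letter, which legitimises the simultaneous elimination; this is immediate from the unique marker hypothesis since then $a_i, b_i \in \freemon[Y]$, and the remainder is routine Tietze and free-product bookkeeping.
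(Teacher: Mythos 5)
Your proposal is correct and follows essentially the same route as the paper's proof: introduce new generators $z_i$ with relations $z_i = u_i$ by Tietze transformations, rewrite the relators as $r_i(z_1,\ldots,z_k)$, use the unique marker property to solve for and eliminate each marker letter (which is legitimate precisely because the remaining letters of $u_i$ are non-marker letters), and read off the resulting presentation as $H \ast \FG(X\setminus\{x_1,\ldots,x_k\})$. Your explicit appeal to the normal form theorem for free products to identify $\subGp{u_1,\ldots,u_k}\leq G$ with the free factor $H$ is a slightly more detailed justification of the embedding the paper states directly, but the argument is the same.
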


\begin{proof}

	We will begin with a series of Tietze transformations which will give us a new form for the group.
We equate each factor word, $u_j$, with a new generator, $z_j$, giving
\begin{equation*}
	\gpPres[r_i(u_1, u_2, \ldots, u_k) = 1 \, (i \in I), u_j = z_j \, (1 \leq j \leq k)][X, Z]
\end{equation*}
where $Z = \lbrace z_1, z_2, \ldots, z_k \rbrace$.

	As $u_1, u_2, \ldots, u_k$ are uniquely marked we may divide the alphabet $X$ into two parts, $Y$, the unique marker letters and $X^\prime = X \setminus Y$.
For each factor we may write $u_j \equiv p_j y_j q_j$ where $y_j \in \overline{Y}$ and $p_j, q_j \in \freemon[X^\prime]$.
Applying this we may easily derive the following
\begin{equation*}
	\gpPres[r_i(z_1, z_2, \ldots, z_k) = 1 \, (i \in I), y_j = p_j^{-1} z_j q_j^{-1} \, (1 \leq j \leq k)][X, Z]
\end{equation*}
from the above.

	From this it is clear that the $y_j$ generators are now redundant, because as unique marker letters they will not have appeared in $p_j$ and $q_j$ which are written over $X^{\prime} = X \setminus Y$.
This gives the presentation
\begin{equation*}
	\gpPres[r_i(z_1, z_2, \ldots, z_k) = 1 \, (i \in I)][X^{\prime}, Z]
\end{equation*}
which has no relations on $x^{\prime}$.
Therefore we may write
\begin{equation*}
	\gpPres[r_i(z_1, z_2, \ldots, z_k) = 1 \, (i \in I)][Z] \ast \FG(X^{\prime}).
\end{equation*}

	This is isomorphic to $G$ under the mapping induced by sending $z_j$ to $u_j$ and $x_j$ to $x_j$ for $z_j \in Z$ and $x_j \in X^{\prime}$.
Hence $\gpPres[r_i(z_1, z_2, \ldots, z_k) = 1 \, (i \in I)][Z]$ is embedded within $G$.

\end{proof}

	We introduce a theorem of Dolinka and Gray \cite[Theorem A]{DolGra21} regarding the decidability of membership in certain submonoids of amalgamated groups products.

\begin{theorem}	\label{thm:subMonOfAmalGpProd}
	Let $G = B \ast_A C$ be a group, where $A$, $B$ and $C$ are finitely generated groups such that both $B$ and $C$ have decidable word problem and that membership in $A$ is decidable within both $B$ and $C$.
Let $M$ be a submonoid of $G$ such that the following hold:
\begin{enumerate}
	\item both $M \cap B$ and $M \cap C$ are finitely generated and
	\begin{equation*}
		M = \subMon{ (M \cap B) \cup (M \cap C) } \leq G;
	\end{equation*}
	\item membership in $M \cap B$ is decidable within $B$;
	\item membership in $M \cap C$ is decidable within $C$
	\item and that $A \subseteq M$.
\end{enumerate}
Then membership in $M$ within $G$ is decidable.
\end{theorem}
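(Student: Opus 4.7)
The strategy is to reduce membership in $M$ to membership in $M_B := M \cap B$ and $M_C := M \cap C$ by exploiting the amalgamated normal form of elements of $G$. The word problem in $G$ is decidable by Lemma \ref{lem:amalProdDec}. Given a word $w$ over the generators of $G$ representing some $g \in G$, one can effectively compute a reduced expression
\begin{equation*}
	g = u_1 u_2 \cdots u_n \cdot a,
\end{equation*}
where the $u_i$ alternate between $B \setminus A$ and $C \setminus A$ and $a \in A$, by iteratively testing whether internal syllables represent elements of $A$ (decidable by the hypothesis on $A$-membership in $B$ and $C$) and collapsing them with their neighbours. By the normal form theorem for amalgamated free products, both $n$ and each right $A$-coset $u_i A$ are uniquely determined by $g$.

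The key observation is that, because $A \subseteq M$, each of $M_B$ and $M_C$ is closed under both left and right multiplication by $A$; so $M_B$ is a union of right $A$-cosets of $B$, and similarly for $M_C$. From this I would deduce the central claim: $g \in M$ if and only if $u_i \in M_B$ (when $u_i \in B$) or $u_i \in M_C$ (when $u_i \in C$) for every $1 \leq i \leq n$. The ``if'' direction is immediate, since $g = u_1 u_2 \cdots u_{n-1} \cdot (u_n a)$ then expresses $g$ as a product of elements of $M_B \cup M_C$, with the tail coefficient $a$ absorbed into the last syllable via right $A$-invariance.

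For the converse, any factorisation $g = h_1 h_2 \cdots h_\ell$ with each $h_j \in M_B \cup M_C$ can first be reduced to an alternating one by merging consecutive factors from the same subgroup, and then any interior $h_j \in A$ may be absorbed into a neighbour using $A \subseteq M_B \cap M_C$ and the $A$-invariance above. The resulting alternating expression is reduced, so by uniqueness of normal forms its length equals $n$ and each $h_j$ lies in the same right $A$-coset of $B$ or $C$ as $u_j$. The $A$-invariance of the submonoids then transfers membership from the $h_j$ to the $u_j$, completing the claim.

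The main obstacle is the bookkeeping required to marry the amalgamated normal form with the submonoid structure, specifically to show that any expression of $g$ as a product over $M_B \cup M_C$ can be rearranged into one of length exactly $n$ whose factors land in the prescribed right $A$-cosets. Once this matching is established, the algorithm itself is short: compute the normal form of $w$ via the $G$-word problem and $A$-membership tests, then check each syllable for membership in the appropriate one of $M_B$ and $M_C$, all of which is decidable under hypotheses (2) and (3) together with the preceding steps.
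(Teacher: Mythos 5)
The paper itself offers no proof of this statement: it is imported verbatim as Theorem A of Dolinka and Gray \cite{DolGra21} and used as a black box, so your argument can only be judged on its own terms. On those terms it is essentially correct, and it is the natural argument: bring the input word into a reduced (alternating) syllable form using the word problems of $B$ and $C$ together with decidability of membership in $A$; note that $A \subseteq M$ forces $M \cap B$ and $M \cap C$ to be closed under multiplication by elements of $A$ on both sides; and then show via the normal form theorem that $g \in M$ exactly when every syllable of a reduced expression for $g$ lies in $M \cap B$ or $M \cap C$ as appropriate. Your mechanism for the harder direction --- merge consecutive factors from the same free factor, absorb factors lying in $A$ into a neighbour, and observe the process terminates in a reduced expression all of whose syllables lie in $M \cap B$ or $M \cap C$ --- is the right one (and works for boundary $A$-factors just as for interior ones). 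Incidentally, you do not need the word problem of $G$ at all, which is just as well, since Lemma \ref{lem:amalProdDec} as stated in the paper assumes finitely presented factors whereas the theorem here only assumes finite generation.

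Two details need tightening. First, the normal form theorem does not determine the right cosets $u_i A$ of the interior syllables, only the double cosets $A u_i A$: two reduced expressions for the same element satisfy $v_1 = u_1 a_1$, $v_i = a_{i-1}^{-1} u_i a_i$, $v_n = a_{n-1}^{-1} u_n$ for suitable $a_i \in A$. This is harmless, because what your transfer step actually uses is the two-sided $A$-invariance of $M \cap B$ and $M \cap C$ (which you establish, even though you then state the weaker ``union of right cosets''), but the claim should be phrased in terms of double cosets. Second, ``effectively compute a reduced expression'' conceals a step: when a syllable written over the generators of, say, $C$ is detected to lie in $A$, collapsing it into its $B$-neighbours requires rewriting it as a word over the generators of $B$, and decidability of membership in $A$ does not by itself provide such a rewriting. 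It does follow from the hypotheses by the standard enumeration trick: since $A$ is finitely generated and $C$ has decidable word problem, search through words in the generators of $A$ until one is found equal in $C$ to the given syllable, then replace those generators by their fixed images in $B$. With these two repairs your proof is complete.
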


	We can now apply this and our previous structural result to find conditions under which the prefix membership problem is decidable.
\begin{theorem}	\label{thm:PMPforUML}

	Let $M = \invPres[r_i(u_1, u_2, \ldots, u_k) = 1 \, (i \in I)]$ be an inverse monoid and let $G = \gpPres[r_i(u_1, u_2, \ldots, u_k) = 1 \, (i \in I)]$ be its maximal group image.
If the words $u_1, u_2, \ldots, u_k \in \freemon$ are uniquely marked and form a conservative factorisation in $G$ (in particular if the factorisation is unital in $M$) and $G$ has decidable word problem then $G$ has decidable prefix membership problem and $M_{EU}$ has decidable word problem (in particular $M$ has decidable word problem if it is E-unitary).

\end{theorem}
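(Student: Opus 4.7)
The plan is to realise $G$ as a free product via Lemma \ref{lem:UMLstructure}, observe that the unique-marker structure makes the prefix monoid split nicely between the two factors, and then apply Theorem \ref{thm:subMonOfAmalGpProd} (with a trivial amalgamated subgroup) to decide prefix membership; Theorem \ref{thm:PMP+GWP=IWP} then supplies the word problem for $M_{EU}$. By Theorem \ref{thm:UFisCF} the ``unital in $M$'' case reduces to the conservative case, so I assume throughout that the factorisation is conservative in $G$.

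Write $u_j \equiv p_j y_j q_j$ with $y_j$ the unique marker of $u_j$ and $p_j, q_j \in \freemon[X']$, where $X' = X \setminus Y$. Lemma \ref{lem:UMLstructure} gives $G \cong H \ast \FG(X')$, where $H = \gpPres[r_i(z_1, \ldots, z_k) = 1 \, (i \in I)][z_1, \ldots, z_k]$ is embedded in $G$ by $z_j \mapsto u_j$, and (as used in the proof of that lemma) the identity $y_j = p_j^{-1} z_j q_j^{-1}$ holds in $G$. Since $H$ embeds into $G$, decidability of the word problem for $G$ passes to $H$; the free group $\FG(X')$ also has decidable word problem. To analyse $P$, note that a prefix of $u_j$ is either a prefix of $p_j$ (an element of $\FG(X')$) or of the form $p_j y_j \beta$ for some prefix $\beta$ of $q_j$; using the identity above and writing $q_j = \beta \gamma$, the latter equals $z_j \gamma^{-1}$ with $\gamma^{-1} \in \pref(q_j^{-1})$. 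A dual analysis shows that each prefix of $u_j^{-1}$ is either in $\pref(q_j^{-1}) \subseteq \FG(X')$ or of the form $z_j^{-1} \delta$ with $\delta \in \pref(p_j)$. Since $u_j$ and $u_j^{-1}$ are themselves among the prefixes, $z_j, z_j^{-1} \in P$; combined with Lemma \ref{lem:prefixMonConFactorForm} this gives $P = \subMon{H \cup P_F}$, where
\[
P_F = \subMon{\bigcup_{j} \bigl( \pref(p_j) \cup \pref(q_j^{-1}) \bigr)} \leq \FG(X')
\]
is a finitely generated submonoid.

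All hypotheses of Theorem \ref{thm:subMonOfAmalGpProd} are now readily verified for $G = H \ast_{\{1\}} \FG(X')$ and the submonoid $P$: trivially $P \cap H = H$, while the retraction $\pi : H \ast \FG(X') \to \FG(X')$ that kills $H$ fixes $\FG(X')$ and satisfies $\pi(P) = P_F$, so $P \cap \FG(X') = \pi(P \cap \FG(X')) \subseteq P_F$, with the reverse inclusion obvious. Membership in $P \cap H = H$ within $H$ is trivial; membership in $P_F$ within $\FG(X')$ is decidable because finitely generated submonoids of free groups are rational subsets, for which the membership problem is well-known to be decidable. Theorem \ref{thm:subMonOfAmalGpProd} then yields decidability of the prefix membership problem for $G$, and Theorem \ref{thm:PMP+GWP=IWP} completes the proof. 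The main obstacle is the prefix-monoid decomposition itself: the unique-marker hypothesis is exactly what allows each prefix to be written as an $H$-syllable times an $\FG(X')$-syllable, and the retraction argument is what rules out additional elements sneaking into $P \cap \FG(X')$ through unanticipated cancellations.
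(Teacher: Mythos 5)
Your proposal is correct and follows essentially the same route as the paper: reduce to the conservative case via Theorem \ref{thm:UFisCF}, use Lemma \ref{lem:UMLstructure} to write $G \cong H \ast \FG(X^{\prime})$, rewrite the prefix generators as $\pref(p_j) \cup z_j\cdot\pref(q_j^{-1})$ and dually, and apply Theorem \ref{thm:subMonOfAmalGpProd} with trivial amalgamation together with Benois's theorem, finishing with Theorem \ref{thm:PMP+GWP=IWP}. Your retraction argument for $P \cap \FG(X^{\prime}) = P_F$ is a small, correct addition where the paper simply asserts the intersection is as expected.
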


\begin{proof}
	We carry over the notation from Lemma \ref{lem:UMLstructure}.

	By Lemma \ref{lem:prefixMonConFactorForm} we know that the prefix monoid takes the form
\begin{equation*}
	P = \subMon{ \bigcup_{1 \leq j \leq k} \bigcup_{\varepsilon \in \lbrace -1, 1 \rbrace} \pref(u_j^{\varepsilon})} \leq G
\end{equation*}

	We saw in Lemma \ref{lem:UMLstructure} that $G \cong H \ast \FG(X^{\prime})$.
We now consider the monoid $P^{\prime} \leq H \ast \FG(X^{\prime})$ which isomorphic to $P \leq G$ under the mapping sending $u_i$ to $z_i$.
We do this by looking at where the generators of $P$ are sent.
Recalling that $u_j \equiv p_j y_j q_j$, we may see that
\begin{equation*}
	\pref(u_j) = \pref(p_j) \cup p_j y_j \cdot \pref(q_j).
\end{equation*}
As our mapping from $G$ to $H \ast \FG(X^{\prime})$ sends $u_j$ to $z_j$ it also sends $y_j$ to $p_j^{-1} z_j p_j^{-1}$.
Using this we see that $\pref(u_j)$ is sent to
\begin{equation*}
	\pref(u_j) 
	= \pref(p_j) \cup z_j q_j^{-1} \cdot \pref(q_j)
	=\pref(p_j) \cup z_j \cdot \pref(q_j^{-1})
\end{equation*}
Dually, we may find that $\pref(u_j^{-1})$ is sent to $\pref(q_j^{-1}) \cup z_j^{-1} \cdot \pref(p_j)$.
Thus if we define the monoid
\begin{equation*}
	Q =
	\subMon{ \bigcup_{1 \leq j \leq k} \left( \pref(p_j) \cup \pref(q_j^{-1}) \right) }
	\leq \FG(X^\prime)
\end{equation*}
we may write that
\begin{equation*}
	P^{\prime} = \subMon{\overline{Z} \cup Q} \leq H \ast \FG(X^{\prime}).
\end{equation*}

	As $P^{\prime}$ is the submonoid of a free product we can apply Theorem \ref{thm:subMonOfAmalGpProd}.
Further, as this free product has no amalgamation, the conditions on $A$ in the Theorem are trivially satisfied.
It is clear that $P^{\prime} \cap H = \subMon{\overline{Z}} = H$ and $P \cap \FG(X^{\prime}) = \subMon{Q}$, and that both of these are finitely generated.
It follows therefore that $P^{\prime} = \subMon{(P^{\prime} \cap H) \cup (P \cap \FG(X^{\prime})}$, satisfying the first condition.
As membership in $H$ as a submonoid of itself is trivially decidable and as all submonoids have decidable membership in the free group $\FG(X^{\prime})$ by Benois's Theorem \cite{Ben69} the second and third conditions are also satisfied.
Thus $P^\prime$ has decidable membership in $H \ast \FG(X^{\prime})$ which by isomorphism means that $P$ has decidable membership in $G$.
Therefore $G$ has decidable prefix membership problem and so by Theorem \ref{thm:PMP+GWP=IWP} the word problem of $M_{EU}$ is decidable.

\end{proof}

	The next lemma, which is due to Magnus, may be found in Lyndon and Schupp \cite[Theorem IV.5.3]{LynSch01}, though it's given form here has been restricted to the case of finitely generated groups.

\begin{lemma}	\label{lem:CyRed-LettersGenDecSubGps}
	Let $G = \gpPres[w=1]$ be a group where $w$ is a cyclically reduced word and let $X^\prime \subseteq X$.
Then membership in $\subGp{X^\prime}$ within $G$ is decidable.
\end{lemma}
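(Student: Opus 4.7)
My plan is to split on whether $X'$ contains every generator that actually appears in $w$.

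If it does, let $Y$ denote the set of generators occurring in $w$, so $Y \subseteq X'$. Since no letter outside $Y$ appears in any relation, $G$ decomposes as
\[
G \cong \gpPres[w=1][Y] \,\ast\, \FG(X \setminus Y),
\]
and inside this decomposition $\subGp{X'} = \gpPres[w=1][Y] \ast \FG(X' \setminus Y)$ is a free factor of $G$. Magnus's one-relator theorem gives decidable word problem in $\gpPres[w=1][Y]$, so Lemma~\ref{lem:freeProdDec} yields decidable word problem for $G$, and membership in a free factor of a free product is then decided by reducing a candidate element to free-product normal form and checking that the syllables from $\FG(X \setminus Y)$ all evaluate to the identity.

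If instead some generator appearing in $w$ lies in $X \setminus X'$, then $\subGp{X'}$ is a \emph{Magnus subgroup} of the one-relator group $G$. Here I would invoke the classical Magnus-Karrass-Solitar induction on the length of $w$: after a Tietze change of generators one expresses $G$ as an HNN extension of a one-relator group $G_1$ with a strictly shorter cyclically reduced defining relator, whose associated subgroups are free Magnus subgroups of the base. Membership in $\subGp{X'}$ is then decided by appealing to Britton's lemma together with the inductively guaranteed decidability of membership in the relevant Magnus subgroups of $G_1$. The base case of the induction is a free group, where membership in any finitely generated subgroup is decidable by Benois's theorem \cite{Ben69}.

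The main obstacle is the second case: at each stage of the HNN rewriting one has to verify that the subgroup in question continues to meet the Magnus-subgroup condition relative to the smaller one-relator group, and that its intersection with the HNN-base remains a controllable Magnus subgroup, so that the induction actually closes. This bookkeeping is precisely the technical core of Lyndon-Schupp's proof of \cite[Theorem IV.5.3]{LynSch01}, to which I would defer for the full details rather than reproducing them here.
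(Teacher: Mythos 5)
Your proposal follows essentially the paper's own route: the paper gives no argument for this lemma at all, simply citing it as Magnus's theorem in the form of Lyndon--Schupp, Theorem IV.5.3, and your main case likewise defers to the Magnus--Karrass--Solitar induction that constitutes the proof of exactly that theorem, so there is nothing substantively different to check there (and the case split is harmless, since the cited theorem already allows arbitrary $X^\prime \subseteq X$). One small slip in your first case should be repaired: relative to the decomposition $G \cong \gpPres[w=1][Y] \ast \FG(X \setminus Y)$, the correct criterion for membership in $\subGp{X^\prime} = \gpPres[w=1][Y] \ast \FG(X^\prime \setminus Y)$ is that every $\FG(X \setminus Y)$-syllable of the normal form lies in $\FG(X^\prime \setminus Y)$ (equivalently, that no syllable from the complementary factor $\FG(X \setminus X^\prime)$ survives), not that every $\FG(X \setminus Y)$-syllable is trivial --- as written, your test would reject a single letter of $X^\prime \setminus Y$, which obviously lies in $\subGp{X^\prime}$. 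This is a local fix and does not affect the overall correctness of the approach.
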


We now give an example of how Theorem \ref{thm:PMPforUML} may be applied to solve the word problem for a finitely presented special inverse monoid.

\begin{example}
	Consider the inverse monoid
\begin{equation*}
	M = \invPres[(z_1)(x_1^2y_1)^2(z_1)=1, (z_2)(x_2^2y_2)^2(z_2)=1, (z_1)(z_2)^{-1}=1]
\end{equation*}
 and its maximal group image
\begin{equation*}
	G = \gpPres[(z_1)(x_1^2y_1)^2(z_1)=1, (z_2)(x_2^2y_2)^2(z_2)=1, (z_1)(z_2)^{-1}=1].
\end{equation*}
The group $G$ is the product of $G_i = \gpPres[(z_i)(x_i^2y_i)^2(z_i)=1][x_i,y_i,z_i]$ (for $i=1,2$) amalgamated over $z_1 = z_2$.
Thus to show $G$ has decidable word problem it suffices to show that the $G_i$ have decidable word problem and that membership in $\subGp{z_i}$ within $G_i$ is decidable.
The $G_i$ are one-relator groups and so have decidable word problem by Magnus's Theorem.
The membership questions are decidable by Lemma \ref{lem:CyRed-LettersGenDecSubGps}.
We have already seen by Example \ref{exm:EuForUMLexample} that $M$ is E-unitary and the brackets demarcate a uniquely marked unital factorisation.
Therefore we may apply Theorem \ref{thm:PMPforUML} and find that $G$ has decidable prefix membership problem and $M$ has decidable word problem.
\end{example}

\begin{remark}
	It is important to note here that given inverse monoids $M_1$ and $M_2$ with maximal group images $G_1$ and $G_2$ that two words $u_1$ and $u_2$ generating isomorphic subgroups in $G_1$ and $G_2$ respectively does not in general imply that they will generate isomorphic inverse submonoids in $M_1$ and $M_2$ respectively.
This means that there may be cases where the group amalgam $G_1 \ast_{u_1 = u_2} G_2$ makes sense as a construct but the inverse monoid amalgam $M_1 \ast_{u_1 = u_2} M_2$ does not.
In our case however the $u_1$ and $u_2$ we are amalgamating over invertible pieces in their respective special inverse monoids and therefore $\subInv{u_i} \leq M_i$ is equal to $\subGp{u_i} \leq G_i$ and so the amalgamation of the two inverse monoids is valid.
\end{remark}

	When the factorisation is minimal and the inverse monoid is E-unitary inverse we can strengthen the results of Theorem \ref{thm:PMPforUML} but to do this we need to introduce the following lemma.

\begin{lemma}	\label{lem:MIPgenUM}
	Let $M = \invPres[r_i = 1 \, (i \in I)]$ be an inverse monoid and let $G = \gpPres[r_i = 1 \, (i \in I)]$ be its maximal group image.
Then the minimal invertible pieces $u_1, \ldots, u_k$ generate the group of units $U_M$ 
\end{lemma}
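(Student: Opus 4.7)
The plan is to establish the equality $U_M = \subInv{u_1, \ldots, u_k}$ via two inclusions. The inclusion $\subInv{u_1, \ldots, u_k} \subseteq U_M$ is immediate from the unital condition built into the definition of minimal invertible piece: each $u_j$ is a unit in $M$, and $U_M$, being a group, is closed under products and inverses.

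For the reverse inclusion, I would take an arbitrary $m \in U_M \subseteq R_M$ and apply the Ivanov--Margolis--Meakin result, used in the proof of Lemma \ref{lem:sigmaRMisP}, that every right unit is a product of prefixes of the relators. This yields $m = p_1 p_2 \cdots p_n$ in $M$ with each $p_j$ a prefix of some relator $r_{i_j}$. The key structural claim is that every \emph{invertible} prefix of a relator coincides, in $M$, with a product of minimal invertible pieces. Concretely, if $r_i \equiv u_{i_1} u_{i_2} \cdots u_{i_{\ell_i}}$ is the factorisation and $p$ is an invertible prefix of $r_i$ with $p \equiv u_{i_1} \cdots u_{i_s} \cdot t$ in $\freemon$ for some non-empty proper prefix $t$ of $u_{i_{s+1}}$, then in $M$ the element $t = (u_{i_1} \cdots u_{i_s})^{-1} p$ is a product of invertibles, hence invertible, contradicting the minimality of $u_{i_{s+1}}$. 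Thus $t$ must be empty and $p = u_{i_1} \cdots u_{i_s}$ as required.

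The main obstacle is bridging the gap between the two ingredients: the IMM decomposition produces pieces $p_j$ which are right units, but not necessarily two-sided units, so the structural claim does not apply to them directly. I would resolve this by an iterative refinement step that exploits the relations $u_{i_1} \cdots u_{i_{\ell_i}} = 1$ in $M$ to extend each $p_j$ forward to the nearest minimal-invertible-piece boundary in the underlying relator, absorbing the ``tail'' either into the subsequent prefix $p_{j+1}$ or discarding it against a full occurrence of the relator (which equals $1$). Because the global product $m$ is a unit, this rewriting terminates with a representation of $m$ as a product of invertible prefixes, to which the structural claim then applies, giving $m \in \subInv{u_1, \ldots, u_k}$ and completing the proof.
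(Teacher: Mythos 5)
Your easy inclusion and your structural claim are both sound: since each minimal invertible piece is a unit, $\subInv{u_1, \ldots, u_k} \subseteq U_M$, and the left-cancellation argument showing that an \emph{invertible} prefix of a relator must terminate at a piece boundary is exactly the right use of the minimality condition. Be aware, though, that the paper does not prove this lemma at all: it quotes it from Dolinka and Gray (Proposition $3.1$), whose argument, going back to Ivanov, Margolis and Meakin, runs through the geometry of the Schützenberger graph of $1$ (the same machinery the paper invokes in Lemma \ref{lem:sigmaRMisP} for ``every right unit is a product of prefixes''), not through a direct manipulation of prefix decompositions.

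The genuine gap is precisely the step you flag as the main obstacle, and your ``iterative refinement'' does not close it. Starting from $m = p_1 p_2 \cdots p_n$ with each $p_j$ a prefix of some relator: (i) consecutive factors $p_j$ and $p_{j+1}$ are prefixes of possibly different relators, each read from its own starting point, so there is no single ``underlying relator'' along which $p_j$ can be extended forward and its tail ``absorbed into $p_{j+1}$''; (ii) if $p_j \equiv u_{i_1} \cdots u_{i_s} t$ with $t$ a nonempty proper prefix of the next piece $u \equiv t t^{\prime}$, you cannot replace $p_j$ by $p_j t^{\prime}$ and compensate elsewhere, because $t^{\prime}$ is not known to be invertible and in an inverse monoid $t^{\prime} (t^{\prime})^{-1}$ is merely an idempotent, not $1$, so inserting it changes the element; likewise a relation $r_i = 1$ only lets you delete a full occurrence of $r_i$ that literally appears as a factor of your product, which the stray tails do not arrange; and (iii) the assertion that ``because $m$ is a unit the rewriting terminates'' comes with no invariant or measure --- the unit property of $m$ is a global fact about the product and exerts no control over the local tails of an arbitrary decomposition into right units (each $p_j$ is only a right unit, as you note). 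Bridging from ``product of right-invertible prefixes'' to ``product of invertible prefixes'' is the entire content of the lemma, and the known proofs cross that bridge with Stephen's procedure and Schützenberger graphs rather than by rewriting the product; as written, your argument establishes only that invertible prefixes lie in $\subInv{u_1, \ldots, u_k}$, not that every unit does.
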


	The above is found in Dolinka and Gray \cite[Proposition $3.1$]{DolGra21}, and is a slight generalisation of arguments found in Ivanov, Margolis and Meakin \cite[Proposition $4.2$]{IMM01}.

\begin{corollary}

	Let $M = \invPres[r_i(u_1, u_2, \ldots, u_k) \, (i \in I)]$ be an E-unitary inverse monoid, $G = \gpPres[r_i(u_1, u_2, \ldots, u_k) \, (i \in I)]$  be its maximal group image and $U_M$ be its group of units.
If $u_1, u_2, \ldots, u_k$ are the minimal invertible pieces of $M$ and are uniquely marked then the following are equivalent:

\begin{enumerate}
	\item The word problem of $G$ is decidable.
	\item The word problem of $M$ is decidable.
	\item The word problem of $U_M$ is decidable.
\end{enumerate}

\end{corollary}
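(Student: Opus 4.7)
The plan is to establish the cycle $(1) \Rightarrow (2) \Rightarrow (3) \Rightarrow (1)$, leveraging the structural result of Lemma \ref{lem:UMLstructure} together with E-unitarity to force $U_M$ to be isomorphic to the obvious ``group-like'' factor in the free product decomposition of $G$.

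For $(1) \Rightarrow (2)$: since $u_1,\ldots,u_k$ are minimal invertible pieces of $M$ they are in particular invertible in $M$, so the factorisation $r_i(u_1,\ldots,u_k)$ is unital. As $M$ is E-unitary, Theorem \ref{thm:UFisCF} gives that it is also conservative in $G$, and we assumed it is uniquely marked. Thus all hypotheses of Theorem \ref{thm:PMPforUML} are met, and since $M$ is E-unitary the theorem directly yields that $M$ has decidable word problem. For $(2) \Rightarrow (3)$: by Lemma \ref{lem:MIPgenUM} the group of units $U_M$ is generated (as a group) by $u_1,\ldots,u_k$, so any word over these generators represents $1$ in $U_M$ iff it represents $1$ in $M$; hence decidability of the word problem of $M$ immediately gives decidability of the word problem of $U_M$.

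The main work is $(3) \Rightarrow (1)$, and this is where I expect the subtlety to lie. The key observation is that the natural map $\sigma : M \to G$ restricts to an injective homomorphism on $U_M$: if $u \in U_M$ satisfies $\sigma(u) = 1$ then by E-unitarity $u \in E_M$, but a unit that is idempotent satisfies $u = u^2$ and hence $u = 1$; applying this to $uv^{-1}$ gives injectivity on all of $U_M$. Combining this with Lemma \ref{lem:MIPgenUM}, we obtain the chain of isomorphisms
\begin{equation*}
    U_M \;\cong\; \sigma(U_M) \;=\; \subGp{u_1,\ldots,u_k} \;\leq\; G,
\end{equation*}
and by Lemma \ref{lem:UMLstructure} this last subgroup is isomorphic to $H = \gpPres[r_i(z_1,\ldots,z_k) = 1 \, (i \in I)][z_1,\ldots,z_k]$ via $u_i \mapsto z_i$. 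Thus a word $w(z_1,\ldots,z_k)$ is trivial in $H$ iff $w(u_1,\ldots,u_k)$ is trivial in $U_M$, so decidability of the word problem of $U_M$ transfers to $H$.

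Finally Lemma \ref{lem:UMLstructure} expresses $G$ as the free product $H \ast \FG(X')$ where $X'$ is the set of non-marker generators. Since free groups have decidable word problem, Lemma \ref{lem:freeProdDec} gives decidability of the word problem of $G$, completing the cycle. The principal obstacle is the argument for $(3) \Rightarrow (1)$, namely the need to identify $U_M$ with $H$ on the nose; this is where E-unitarity is essential, as without it the restriction of $\sigma$ to $U_M$ need not be injective and the two groups could a priori differ.
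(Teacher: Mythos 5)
Your proposal is correct and follows essentially the same route as the paper: $(1)\Rightarrow(2)$ via Theorem \ref{thm:PMPforUML} (minimal invertible pieces giving a unital, hence conservative, factorisation), $(2)\Rightarrow(3)$ since $U_M$ sits inside $M$ with generators $u_1,\ldots,u_k$ by Lemma \ref{lem:MIPgenUM}, and $(3)\Rightarrow(1)$ by identifying $U_M$ with $\subGp{u_1,\ldots,u_k}\leq G$ and using Lemma \ref{lem:UMLstructure} together with Lemma \ref{lem:freeProdDec}. Your explicit verification that $\sigma$ is injective on $U_M$ (via E-unitarity and the fact that the only idempotent unit is $1$) is a welcome justification of a step the paper's proof leaves implicit when it asserts that $G$ is the free product of $U_M$ and a free group.
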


\begin{proof}
	The minimal invertible pieces inherently form a unital factorisation and so $(1)$ implies $(2)$ by Theorem \ref{thm:PMPforUML}.
As $U_M$ is a submonoid of $M$, $(2)$ implies $(3)$.
We know by Lemma \ref{lem:UMLstructure} that $G$ is isomorphic to the free product of a free group and the subgroup generated by the $u_i$.
By Lemma \ref{lem:MIPgenUM} the $u_i$ being minimal invertible pieces implies that they generate the subgroup $U_M$.
Thus $G$ is the free product of $U_M$ and a free group.
By Lemma \ref{lem:freeProdDec} the free product of two groups with decidable word problem has decidable word problem.
Therefore $(3)$ implies $(1)$ and so we have equivalence.
\end{proof}

	We now introduce a theorem from a paper of Gray and Ruskuc \cite[Theorem $3.2$]{GraRus23}.

\begin{theorem}		\label{thm:NikBobRewrite}
	Let $M = \invPres[r_i(u_1, u_2, \ldots, u_k) = 1 \, (i \in I)]$ be an inverse monoid, where the $u_i$ are all reduced words and form a unital factorisation.
Further, let
\begin{equation*}
\phi: \FG(Y) \rightarrow \subGp{u_1, u_2, \ldots, u_k} \leq \FG(X)
\end{equation*} 
be an epimorphism and let $u_1^\prime, u_2^\prime \ldots, u_k^\prime \in \FG(Y)$ be such that $\phi(u_i^\prime) = u_i$ in $\FG(X)$, for $1 \leq i \leq k$.
Finally, let $\psi: \freemon[Y] \rightarrow \freemon$ be the homomorphism produced by extending $y \mapsto \phi(y) \, (y \in \overline{Y})$.
Then
\begin{equation*}
	M = \invPres[r_i(\psi(u_1^\prime), \psi(u_2^\prime), \ldots, \psi(u_k^\prime)) = 1 \, (i \in I)]
\end{equation*}
and $\lbrace \psi(y) \mid y \in \overline{Y} \rbrace$ is a set of invertible pieces of $M$ which generates the same subgroup of $M$ as $u_1, \ldots, u_k$.
\end{theorem}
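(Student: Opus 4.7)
The strategy is to verify that both presentations define the same inverse monoid on $X$ by showing that the identity on $\overline{X}$ induces well-defined inverse monoid homomorphisms both ways. Everything reduces to a single identity: $\psi(u_i^\prime) = u_i$ in $M$ for each $i \in I$. Granted this, the words $r_i(\psi(u_1^\prime), \ldots, \psi(u_k^\prime))$ and $r_i(u_1, \ldots, u_k)$ represent the same element of $M$, so the proposed new relators are trivial in $M$; for the reverse direction, the same identity applied inside the candidate monoid $N := \invPres[r_i(\psi(u_1^\prime), \ldots, \psi(u_k^\prime)) = 1 \, (i \in I)][X]$ gives that the original relators are trivial in $N$.

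The key technical step is the following lemma: for any word $W$ over the symbols $\{u_1^{\pm 1}, \ldots, u_k^{\pm 1}\}$, if the expansion $\widetilde{W} \in \freemon$ equals $1$ in $\FG(X)$, then $\widetilde{W} = 1$ in $M$. I would apply this with $W$ realising $\psi(u_i^\prime) u_i^{-1}$ as a concatenation of $u_j^{\pm 1}$'s -- possible because surjectivity of $\phi$ onto $\subGp{u_1, \ldots, u_k}$ lets us rewrite each $\phi(y)$ as such a product -- giving $\psi(u_i^\prime) = u_i$ in $M$. The lemma is established by induction on the length of $W$ in the free group $\FG(u_1, \ldots, u_k)$: whenever $W$ contains an adjacent $u_j u_j^{-1}$ or $u_j^{-1} u_j$ pair, we cancel it using the unit property $u_j u_j^{-1} = u_j^{-1} u_j = 1$ in $M$, which shortens $W$ while preserving triviality of its expansion in $\FG(X)$.

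The remaining assertions follow readily. Each $\psi(y)$ equals in $\FG(X)$ a product of the $u_j^{\pm 1}$'s, and so by the lemma equals in $M$ the corresponding product of units; hence $\psi(y)$ is itself an invertible piece of $M$. Surjectivity of $\phi$ then forces the subgroup of $M$ generated by $\{\psi(y) \mid y \in \overline{Y}\}$ to coincide with $\subGp{u_1, \ldots, u_k}$. The main obstacle is the harder case of the key lemma, in which $W$ is freely reduced in $\FG(u_1, \ldots, u_k)$ but $\widetilde{W}$ nevertheless collapses to $1$ in $\FG(X)$ through letter-level cancellations at the boundaries between successive $u_j$'s in the expansion. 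Realising these boundary cancellations inside $M$, which has no direct access to free reduction in $\FG(X)$, requires a delicate combinatorial argument -- plausibly via a Stallings-folding or Munn-tree analysis -- that exploits the reducedness of each $u_j$ together with the unit relations in $M$.
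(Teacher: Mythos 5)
You should know at the outset that the paper does not prove this statement at all: it is quoted from Gray and Ruskuc \cite[Theorem 3.2]{GraRus23}, so your proposal has to stand on its own, and as written it has two genuine gaps. First, the crux of your argument is exactly the part you leave open. Your key lemma is true, but your proof handles only the case where the word over the symbols $u_1^{\pm 1},\ldots,u_k^{\pm 1}$ admits a symbol-level cancellation; the ``harder case'', where the expansion collapses only through letter-level cancellations across the boundaries, is the entire content of the lemma, and ``plausibly via a Stallings-folding or Munn-tree analysis'' is not an argument. Moreover, even granting the lemma, it does not apply to the words you need it for: $\psi(y)$ and $\psi(u_i^\prime)$ are built from \emph{reduced} representatives of elements of $\FG(X)$, not literal concatenations of the $u_j^{\pm 1}$, so $\psi(u_i^\prime)u_i^{-1}$ is not the expansion $\widetilde W$ of any word over the symbols; to get from ``$\widetilde W = 1$ in $M$'' to ``$\psi(u_i^\prime) = u_i$ in $M$'' you must compare, inside $M$, a word with its free reduction, and in an inverse monoid these are in general not equal --- that comparison is never made. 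Second, the reverse inclusion is unjustified: your route to $\psi(u_i^\prime)=u_i$ uses $u_ju_j^{-1}=u_j^{-1}u_j=1$, i.e.\ the unital hypothesis in $M$, but in the candidate monoid $N$ neither the $u_j$ nor the $\psi(y)$ are known to be invertible (that is part of what is being proved), so ``the same identity applied inside $N$'' is circular. Note also that you never actually invoke the hypothesis that the $u_i$ are reduced words, which is essential.

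Both gaps close with the natural partial order rather than any delicate combinatorics. In every inverse monoid the element represented by a word $w$ lies below the element represented by $\red(w)$ (deleting $xx^{-1}$ from $axx^{-1}b$ multiplies $ab$ on the left by the idempotent $axx^{-1}a^{-1}$), the order is compatible with products and inverses, and $1$ --- indeed any unit --- is maximal (if $s=et$ with $e$ idempotent and $s$ a unit, then $e\,tt^{-1}=1$ forces $e=1$, so $s=t$). In $M$: each $\psi(y)$ lies above the unreduced product of $u_j^{\pm1}$ representing $\phi(y)$, which is a unit by the unital hypothesis, hence $\psi(y)$ equals that product and is invertible; then $\psi(u_i^\prime)$ is a unit and $\psi(u_i^\prime)\le\red(\psi(u_i^\prime))\equiv u_i$ (this is where reducedness of $u_i$ enters), so $\psi(u_i^\prime)=u_i$ in $M$ and the new relators hold in $M$; the claims about invertible pieces and the subgroup they generate follow at once. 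In $N$ no invertibility is needed at all: since $\psi(u_j^\prime)\le u_j$ in any inverse monoid and the order respects multiplication, $1=r_i(\psi(u_1^\prime),\ldots,\psi(u_k^\prime))\le r_i(u_1,\ldots,u_k)$ in $N$, and maximality of $1$ gives $r_i(u_1,\ldots,u_k)=1$ in $N$. This yields the equality of the two presentations without the deferred combinatorial argument and without transferring any unit information to $N$.
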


	The statement of this theorem is technical.
The key point is that there is sometimes a way to find an alternative presentation for an inverse monoid which decomposes into a more convenient set of unital factors.
For convenience we will render the particular way we intend to use it as a corollary.

\begin{corollary}	\label{cor:changeUnitsOfFac}
	Let $M = \invPres[r_i(u_1, \ldots, u_k) = 1 \, (i \in I)]$ be an inverse monoid and let the $r_i$ have a unital factorisation into a set of reduced words $U = \lbrace u_1, \ldots, u_k \rbrace \subset \freemon$.
Let $V = \lbrace v_1, \ldots, v_l \rbrace \subset \freemon$ be a set of words such that $\subGp{V} = \subGp{U} \leq \FG(X)$.
Then there are $r_i^\prime \in \freemon$ such that
\begin{equation*}
	M = \invPres[r_i^\prime = 1 \, (i \in I)],
\end{equation*}
that each $r_i^\prime$ reduces to $r_i$ and that the $r_i^\prime$ have a unital factorisation into $V$.
\end{corollary}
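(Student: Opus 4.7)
My plan is to deduce this corollary as a direct application of Theorem \ref{thm:NikBobRewrite} by choosing the auxiliary alphabet $Y$ and the map $\phi$ so that the resulting invertible pieces $\psi(y)$ are exactly the words in $V$.

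First, I would introduce a fresh alphabet $Y = \{y_1, \ldots, y_l\}$ in bijection with $V$ and define $\phi: \FG(Y) \to \FG(X)$ by $\phi(y_j) = v_j$. The image of $\phi$ is the subgroup $\subGp{V} \leq \FG(X)$, which by hypothesis equals $\subGp{U}$, so $\phi$ is an epimorphism onto $\subGp{u_1, \ldots, u_k}$. Since each $u_i$ lies in $\subGp{U} = \mathrm{im}(\phi)$, I can choose lifts $u_i^\prime \in \FG(Y)$ with $\phi(u_i^\prime) = u_i$. Let $\psi: \freemon[Y] \to \freemon[X]$ be the homomorphism extending $y \mapsto \phi(y)$ on $\overline{Y}$, so that $\psi(y_j) \equiv v_j$ and $\psi(y_j^{-1}) \equiv v_j^{-1}$ literally as words in $\freemon[X]$.

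Now apply Theorem \ref{thm:NikBobRewrite} (which we may do since the $u_i$ are reduced and the original factorisation is unital). This gives
\begin{equation*}
M = \invPres[r_i(\psi(u_1^\prime), \ldots, \psi(u_k^\prime)) = 1 \, (i \in I)],
\end{equation*}
and moreover $\{\psi(y) \mid y \in \overline{Y}\} = V \cup V^{-1}$ is a set of invertible pieces of $M$. Setting $r_i^\prime \equiv r_i(\psi(u_1^\prime), \ldots, \psi(u_k^\prime))$ provides the required presentation.

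The remaining verifications are routine. Because $r_i^\prime$ is obtained from the word $r_i$ by substituting, for each symbol $u_j^{\pm 1}$, a word $\psi(u_j^\prime)^{\pm 1}$ that is itself a concatenation of $v_m^{\pm 1}$'s, the word $r_i^\prime$ is visibly a product of elements of $V \cup V^{-1}$, and since every $v_m$ is a unit in $M$ this is a unital factorisation into $V$. Finally, in $\FG(X)$ we have $\psi(u_j^\prime) = \phi(u_j^\prime) = u_j$, so the substitution word $r_i^\prime$ freely reduces to the same element of $\FG(X)$ as $r_i(u_1, \ldots, u_k)$ does, i.e.\ $r_i^\prime$ reduces to $r_i$. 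There is no real obstacle here — the only thing to be careful about is distinguishing between the literal (monoid) substitution performed by $\psi$ and the free-group equality $\phi(u_i^\prime) = u_i$, which is exactly the distinction that Theorem \ref{thm:NikBobRewrite} is designed to handle.
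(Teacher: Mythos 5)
Your proposal follows essentially the same route as the paper: introduce $Y$ in bijection with $V$, take $\phi:\FG(Y)\to\subGp{U}\leq\FG(X)$ with $\phi(y_j)=v_j$ (an epimorphism because $\subGp{V}=\subGp{U}$), choose lifts $u_i^\prime$ of the $u_i$, and feed this into Theorem \ref{thm:NikBobRewrite} to get $r_i^\prime \equiv r_i(\psi(u_1^\prime),\ldots,\psi(u_k^\prime))$, which is visibly a product of elements of $\overline{V}$ and hence unitally factorised over $V$. The one place you are too quick is the claim that each $r_i^\prime$ reduces to $r_i$: equality of $r_i^\prime$ and $r_i$ in $\FG(X)$ only tells you they share a common reduced form, which is weaker than the stated word-level conclusion when $r_i$ itself need not be a reduced word (the paper's remark after the proof stresses that the $r_i$ are not assumed reduced; compare $xx^{-1}$ and $yy^{-1}$, equal in the free group but neither reducing to the other). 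The repair uses exactly the hypothesis you already invoked earlier: since each $u_j$ is reduced and $\psi(u_j^\prime)=u_j$ in $\FG(X)$, we have $\red(\psi(u_j^\prime))\equiv u_j$, so performing free reductions inside each substituted block of $r_i^\prime$ recovers the word $r_i$ literally; this block-by-block reduction is how the paper justifies the claim.
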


\begin{proof}

	Let the set of words $w_j(v_1, \ldots, v_l) \in \freemon$ be such that $w_j(v_1, \ldots, v_l) = u_j$ in $\FG(X)$, for $1 \leq j \leq k$. 
Let $Y = \lbrace y_1, \ldots, y_l \rbrace$ and let $u_j^\prime \equiv w_j(y_1, \ldots, y_l)$.
Finally, let
\begin{equation*}
	\phi: \FG(Y) \rightarrow \subGp{U} \leq \FG(X)
	\text{ and }
	\psi: \freemon[Y] \rightarrow \freemon
\end{equation*}
be the maps sending $y_i$ to $v_i$ as elements and words respectively.

	We can see that $\phi(\FG(Y)) = \subGp{V} \leq \FG(X)$ by our definition of $\phi$.
By assumption $\subGp{V} = \subGp{U} \leq \FG(X)$ and so $\phi$ is an epimorphism.
Moreover 
\begin{equation*}
	\phi(u_j^\prime) = \phi(w_j(y_1, \ldots, y_l)) = w_j(v_1, \ldots, v_l) = u_j
\end{equation*}
in $\FG(X)$ and $\overline{V} = \lbrace \psi(y) \mid y \in \overline{Y} \rbrace$.
Therefore we satisfy all the conditions of Theorem \ref{thm:NikBobRewrite}.

	Hence if we let 
\begin{equation*}	
	r_i^\prime 
	= r_i(\psi(u_1^\prime), \ldots, \psi(u_k^\prime)) 
	= r_i( w_1(v_1, \ldots, v_k), \ldots, w_k(v_1, \ldots,v_k) )
\end{equation*}
we have that $M = \invPres[r_i^\prime = 1 \, (i \in I)]$.
Moreover we have that the set $\overline{V}$ is composed of invertible pieces in $M$ and therefore the $r_i^\prime$ have a unital factorisation into the $V$ in $M$.
We also observe that as each $u_j$ is reduced it must be the case that if $w_j(v_1, \ldots, v_l) = u_j$ then $\red(w_j(v_1, \ldots, v_l)) \equiv u_j$, thus $r_i^\prime$ reduces to $r_i$.
\end{proof}

\begin{remark}
	Though each $r_i^{\prime}$ must reduce to its respective $r_i$, we do not strictly require that the $r_i$ are reduced words.
\end{remark}

	In particular, this allows us to identify a family of special inverse monoids whose initial presentations do not possess a uniquely marked unital factorisation but which may be differently presented so that there is a unital and uniquely marked factorisation.
This generalises a result of Dolinka and Gray \cite[Theorem $5.4$]{DolGra21}

\begin{theorem}	\label{thm:hiddenUML}
	Let $M = \invPres[r_i(u_1, u_2, \ldots, u_k) = 1 \, (i \in I)]$ be an inverse monoid and the factorisation into $u_j$ be unital.
Further let the maximal group image of $M$ be $G =\gpPres[r_i(u_1, u_2, \ldots, u_k) = 1 \, (i \in I)]$ and suppose it has decidable word problem.
Finally, for $1 \leq j \leq \ell$, let there be $x_j, z_j \in X_j \subseteq X$, $Y_j = X_j \setminus \lbrace x_j, z_j \rbrace$ and $W_j \subset \freemon[Y_j]$ such that the following hold:
\begin{enumerate}
	\item The sets $X_{j_1}$ and $X_{j_2}$ are equal if and only if $j_1 = j_2$ and are disjoint otherwise.
	\item The sets $\lbrace u_1, u_2, \ldots, u_k \rbrace$ and $\bigcup_{j=1}^{\ell} \lbrace x_j \cdot w \cdot z_j \mid w \in W_j \rbrace$ are equal.
	\item The empty word belongs to each $W_j$, for $1 \leq j \leq \ell$.
	\item For each $y \in Y_j$ there is some $w_y \in \freemon[W_j]$ such that $ y \equiv \red(w_y)$, for $1 \leq j \leq \ell$.
\end{enumerate}

	Then $G$ has decidable prefix membership problem and $M_{EU}$ has decidable word problem (in particular $M$ has decidable word problem if it is E-unitary).

\end{theorem}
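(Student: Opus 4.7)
The plan is to reduce to Theorem~\ref{thm:PMPforUML} by using Corollary~\ref{cor:changeUnitsOfFac} to replace the given unital factorisation with one that is uniquely marked. The obstruction to applying Theorem~\ref{thm:PMPforUML} directly is that $x_j$ and $z_j$ are shared by every piece of block $j$, so whenever $|W_j| > 1$ no letter can serve as a marker for the individual pieces $x_j w z_j$. The resolution is to swap in new pieces that put the distinguishing letter strictly between the framing letters $x_j$ and $z_j$.

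Concretely, I would define
\begin{equation*}
	V = \{ x_j z_j : 1 \le j \le \ell \} \cup \{ z_j^{-1} y z_j : 1 \le j \le \ell,\ y \in Y_j \}
\end{equation*}
and show that $\subGp{V} = \subGp{u_1, \ldots, u_k}$ in $\FG(X)$. Condition~3 places each $x_j z_j$ directly in $U$. For $z_j^{-1} y z_j$, condition~4 writes $y$ as the reduction of some product $w_y \equiv w_{i_1}^{\epsilon_1} \cdots w_{i_n}^{\epsilon_n}$ over $W_j$; interleaving this product with trivial insertions $x_j^{-1} x_j$ telescopes $z_j^{-1} y z_j$ into a product $\prod_{t=1}^{n} (x_j z_j)^{-1}(x_j w_{i_t}^{\epsilon_t} z_j)$, and each of these factors is in $\subGp{U}$ (the positive case lies in $U$; the negative case is a conjugate $(x_j z_j)(x_j w_{i_t} z_j)^{-1}(x_j z_j)$). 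The converse inclusion is the symmetric telescoping that splits $u_i = x_j w z_j$ along the letters of $w \in W_j$. Corollary~\ref{cor:changeUnitsOfFac} then produces an equivalent special presentation $M = \invPres[r_i'' = 1]$ whose relators reduce to the $r_i$ (so the maximal group image remains $G$) and admit a unital factorisation into $V$.

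The final step is to check that this new factorisation is uniquely marked. Condition~1, together with $Y_j = X_j \setminus \{ x_j, z_j \}$, guarantees that $x_j$ occurs exactly once in $x_j z_j$ and nowhere else in $V$: it avoids every piece from a block $j' \ne j$ by disjointness of the $X_{j'}$, and avoids the pieces $z_j^{-1} y z_j$ because $x_j \notin Y_j$. The same reasoning shows that each $y \in Y_j$ uniquely marks $z_j^{-1} y z_j$. With $V$ uniquely marked, unital (and hence conservative in $G$ by Theorem~\ref{thm:UFisCF}), and with $G$ having decidable word problem by hypothesis, Theorem~\ref{thm:PMPforUML} delivers the conclusion. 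The main obstacle is engineering $V$ so that both the subgroup identity $\subGp{V} = \subGp{U}$ and unique marking hold simultaneously: condition~3 is precisely what makes $x_j z_j$ available in $\subGp{U}$ to drive the telescoping, and condition~4 is precisely what allows each $z_j^{-1} y z_j$ to be reached inside $\subGp{U}$.
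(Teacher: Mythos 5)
Your overall route is the same as the paper's: replace the unital factorisation into the pieces $x_j w z_j$ by a new family of pieces obtained by conjugating the letters of $Y_j$ by a framing letter, verify the subgroup equality in the free group, invoke Corollary \ref{cor:changeUnitsOfFac} to re-present $M$ with a unital factorisation into the new pieces, check unique marking, and finish with Theorem \ref{thm:PMPforUML}. Your choice $V = \lbrace x_j z_j \rbrace \cup \lbrace z_j^{-1} y z_j \mid y \in Y_j \rbrace$ differs only cosmetically from the paper's $V = \lbrace x_j z_j \rbrace \cup \lbrace x_j y x_j^{-1} \mid y \in Y_j \rbrace$, and your telescoping argument for $\subGp{V} = \subGp{u_1, \ldots, u_k} \leq \FG(X)$ is correct in substance; only your parenthetical formula for the negative case is off, since $(x_j z_j)(x_j w z_j)^{-1}(x_j z_j)$ equals $x_j w^{-1} z_j$ rather than $z_j^{-1} w^{-1} z_j$ --- the correct expression is $(x_j w z_j)^{-1}(x_j z_j)$, or one can simply appeal to closure of subgroups under inversion. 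The unique-marking check is also fine.

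There is, however, a gap at the very end. Applying Theorem \ref{thm:PMPforUML} to the rewritten presentation $M = \invPres[r_i'' = 1]$ yields decidability of the word problem for $M_{EU}$ (which is the same inverse monoid, so that part of the conclusion is secured) together with decidable membership in the prefix monoid \emph{of the new presentation}, i.e.\ the submonoid of $G$ generated by the prefixes of the $r_i''$. The prefix membership problem is presentation-dependent (the paper stresses this with its $aab$ versus $aba$ example), and the theorem's conclusion concerns the prefix monoid of the original relators $r_i$; the observation that the $r_i''$ reduce to the $r_i$ identifies the group but not the two prefix monoids. The paper closes this with an extra paragraph using Lemma \ref{lem:sigmaRMisP}: because both sets of relators present the same inverse monoid $M$, both prefix monoids equal $\sigma(R_M)$ as subsets of $G$, hence they coincide and decidability transfers to the prefix membership problem of the original presentation. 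You need this (short) bridging argument to obtain the stated conclusion that $G$ has decidable prefix membership problem.
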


\begin{proof}

	Let $V = \bigcup_{j=1}^{\ell} \lbrace x_j y x_j^{-1} \mid y \in Y_j \rbrace \cup \lbrace x_j z_j \rbrace$ and $U = \lbrace u_1, u_2, \ldots, u_k \rbrace$.
We know that each $u_i$ is of the form $x_j \cdot w \cdot z_j$ for some $w \in W_j \subset \freemon[Y]$.
Therefore if $w \equiv y_1 y_2 \ldots y_n$ we can say that $u_i$ is equal in the free group $\FG(X_j)$ to $x_j y_1 x_j^{-1} \cdot x_j y_2 x_j^{-1} \cdot \ldots \cdot x_j y_n x_j^{-1} \cdot x_j z_j$.
Thus $\subGp{U} \subseteq \subGp{V} \leq \FG(X)$.

	Each $y \in Y_j$ has a corresponding $w_y \in \freemon[W_j]$ which reduces to $y$.
This means any $v \equiv x_j y x_j^{-1}$ is equal to $x_j w_y x_j^{-1}$ in the free group $\FG(X_j)$.
In the free group this $x_j w_y x_j^{-1}$ will be a product of $x_j w x_j^{-1}$ for $w \in W$.
Thus as $x_j w x_j^{-1} = x_j w z_j (x_j z_j)^{-1} \in \subGp{U}$, it follows that $x_j y x_j^{-1} \in \subGp{U}$.
The only $v \in V$ not of the form $x_j y x_j^{-1}$ is $x_j z_j$ which already amongst the $u_i$.
Thus $\subGp{V} \subseteq \subGp{U} \leq \FG(X)$ and therefore $\subGp{U} = \subGp{V} \leq \FG(X)$.

	This means that we can apply Corollary \ref{cor:changeUnitsOfFac}.
This means that there are some $r_i^\prime$ such that $M^\prime = \invPres[r_i^\prime = 1 \, (i \in I)]$ is equal to $M$ and further it means that the $r_i^\prime$ have a factorisation into $V$ which is unital in $M$ and uniquely marked.
By Theorem \ref{thm:PMPforUML} we can now decide membership for the prefix monoid of the group $G^\prime = \gpPres[r_i^\prime = 1 \, (i \in I)]$.

	Suppose that some word $w \in \freemon$ belongs to this monoid when considered as an element of $G^\prime$.
By Lemma \ref{lem:sigmaRMisP} we know that this means $w$ is right invertible as an element of $M^\prime$.
As $M^\prime = M$ this means that $w$ is right invertible as an element of $M$ too, therefore by a second application of Lemma \ref{lem:sigmaRMisP} $w$ belongs to the prefix monoid of $G$.
Thus we can decide the prefix membership problem for $G$ and therefore by Theorem \ref{thm:PMP+GWP=IWP} the word problem for $M_{EU}$.

\end{proof}

We give will now give an example of applying the above result.

\begin{example}
	For $i = 1, 2$, let
\begin{equation*}
	M_i = \invPres[ (a_i b_i c_i d_i)(a_i c_i d_i)(a_i d_i)(a_i b_i b_i c_i d_i)(a_i c_i d_i) = 1][a_i, b_i, c_i, d_i].
\end{equation*}
These are copies of the \textit{O'Hare monoid} (see \cite{MMS87} for more, including an implicit proof of why the above factorisation is unital).
These monoids are both E-unitary by Theorem \ref{thm:cycRedIsEu} as they have one cyclically reduced relator.
Let $G_1$ and $G_2$ be the maximal group images of $M_1$ and $M_2$ respectively, they will be one relator and thus have decidable word problem by Magnus's Theorem.
Further let $M = M_1 \ast_{a_1 b_1 b_1 c_1 d_1 = a_2 d_2} M_2$, by Theorem \ref{thm:EuAmalByUM} this is E-unitary and has a special finite presentation 
\begin{align*}
	\Inv \langle a_i, b_i, c_i, d_i \mid
	(a_i b_i c_i d_i)(a_i c_i d_i)(a_i d_i)(a_i b_i b_i c_i d_i&)(a_i c_i d_i) = 1 
	\text{ for } i = 1,2, \\
	&(a_1 b_1 b_1 c_1 d_1) (a_2 d_2)^{-1} = 1
	\rangle
\end{align*}
	The maximal group image of $M$ has decidable word problem by Lemma \ref{lem:amalProdDec} as it is the amalgamation of two groups with decidable word problem, $G_1$ and $G_2$, over finitely generated subgroups with decidable membership within their respective groups, $\subGp{a_1 b_1 b_1 c_1 d_1} \leq G_1$ and $\subGp{a_2 d_2} \leq G_2$
(these are decidable by Benois' Theorem as the $G_i$ are free, see \cite[Remark $5.5$]{DolGra21}).

	If we let $X_i = \lbrace a_i, b_i, c_i, d_i \rbrace$, $Y_i = \lbrace b_i, c_i \rbrace$ and $W_i = \lbrace b_i c_i, c_i, 1, b_i b_i c_i \rbrace$ then it is easily seen that $M$ satisfies the conditions for Theorem \ref{thm:hiddenUML} and therefore has decidable word problem.
\end{example}

\section{Factorisations with Disjoint Alphabets}

	In this section we will proceed largely along parallel lines to the previous one though focusing on a different property that the factorisation may possess.

	We say that a factorisation $w_i(u_1, u_2, \ldots, u_k) \, (i \in I)$ of words in $\freemon$ is \textit{alphabetically disjoint} if $k \geq 2$ and for all $1 \leq j_1, j_2 \leq k$ such that $j_1 \neq j_2$ the factors $u_{j_1}$ and $u_{j_2}$ have no letters in common (with $x$ and $x^{-1}$ counting as a common letter).
We require that $k \geq 2$ as otherwise the condition becomes too broad.

	We now consider the implications of a group possessing such a factorisation.
Similarly to the first lemma of the previous section, this lemma compiles some results implicit in \cite{DolGra21} and \cite{GraRus23} and provides a proof for the sake of completeness.

\begin{lemma}	\label{lem:DAstructure}
	
	Let $G = \gpPres[r_i(u_1, u_2, \ldots, u_k) = 1 \, (i \in I)]$ be a group where the factorisation into $u_1, u_2, \ldots, u_k \in \freemon[X]$ is alphabetically disjoint.
Further let $X_1, \ldots, X_k \subset X$ be sets where each $X_j$ is the minimal alphabet such that $u_j \in \freemon[X_j]$ and let $X_0 = X \setminus \bigcup_{j=1}^{k} X_j$.

	Let
\begin{equation*}
	G_j = \gpPres[r_i( z_1, \ldots z_j, u_{j+1}, \ldots, u_k) = 1 \, (i \in I)][X_0, z_1, \ldots, z_j, X_{j+1}, \ldots, X_k]
\end{equation*}
for $0 \leq j \leq k$.
Then the order of $u_j$ is the same in all $G_i$ where $0 \leq i < j$ and also equal to the order of $z_j$ in all $G_i$ where $j \leq i \leq k$.

	For $0 \leq j \leq k$, let $m_j$ be the order of $u_j$ in $G_{j-1}$ if this is finite and $0$ otherwise.
Further let $B_j = \gpPres[u_j^{m_j} = 1][X_j]$, and thus $B_j = \FG(X_j)$ if $u_j$ is of infinite order, be a second series of groups.

	These are such that $G = G_0$, $G_k = H \ast \FG(X_0)$ where
\begin{equation*}	
	H = \gpPres[r_i(z_1, \ldots, z_k) = 1 \, (i \in I)][z_1, \ldots, z_k], 
\end{equation*}
and that
\begin{equation*}
	G_{j-1} = G_j \ast_{A_j} B_j
\end{equation*}
where $A_j$ represents amalgamation by $u_j = z_j$ over the isomorphic subgroups $\subGp{u_j} \leq B_j$ and $\subGp{z_j} \leq G_j$.
Consequently $H$ embeds naturally into $G$ under the mapping $z_i$ to $u_i$.

\end{lemma}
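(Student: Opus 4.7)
The plan is downward induction on $j$ from $k$ to $0$, maintaining the inductive hypothesis that $H$ embeds in $G_j$ via the natural map $\phi_j$ sending $z_i \mapsto z_i$ for $i \leq j$ and $z_i \mapsto u_i$ for $i > j$, with orders of elements preserved under this embedding. The base case $j = k$ is immediate: no letter of $X_0$ appears in any relator of $G_k$, so $G_k \cong H \ast \FG(X_0)$ with $\phi_k$ the inclusion into the first free factor, and orders are preserved by free products.

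For the inductive step, I would set $m_j := \operatorname{ord}_{G_j}(z_j)$ (taking $m_j = 0$ when this is infinite), which by the inductive hypothesis equals $\operatorname{ord}_H(z_j)$, and define $B_j = \gpPres[u_j^{m_j} = 1][X_j]$. The first key step is to invoke the Magnus--Karrass--Solitar theorem, applied to the cyclic reduction of $u_j$ written as $w^d$ with $w$ cyclically reduced and not a proper power, to conclude that $u_j$ has order exactly $m_j$ in $B_j$. This matches the order of $z_j$ in $G_j$, so the amalgamated free product $G_j \ast_{A_j} B_j$ is properly defined and both factors embed as subgroups.

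The second key step is a Tietze computation: eliminating the generator $z_j$ via $z_j = u_j$ in the presentation of the amalgam yields
\begin{equation*}
\gpPres[r_i(z_1, \ldots, z_{j-1}, u_j, u_{j+1}, \ldots, u_k) = 1, \, u_j^{m_j} = 1][X_0, z_1, \ldots, z_{j-1}, X_j, \ldots, X_k],
\end{equation*}
which differs from the presentation of $G_{j-1}$ only by the relation $u_j^{m_j} = 1$. I would show this extra relation is already a consequence of $G_{j-1}$'s defining relations by exhibiting the natural homomorphism $H \to G_{j-1}$, $z_i \mapsto u_i$; it is well-defined because the relators of $H$ translate directly into those of $G_{j-1}$, and it sends the order-$m_j$ element $z_j$ to $u_j$, forcing $u_j^{m_j} = 1$ in $G_{j-1}$. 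This establishes $G_{j-1} \cong G_j \ast_{A_j} B_j$, and composing the embeddings gives $\phi_{j-1}\colon H \hookrightarrow G_j \hookrightarrow G_{j-1}$, preserving orders; in particular $\operatorname{ord}_{G_{j-1}}(u_j) = m_j$, matching the lemma's definition and closing the induction. Iterating down to $j = 0$ yields all the remaining claims. I expect the main obstacle to be verifying that the two cyclic subgroups $\subGp{z_j} \leq G_j$ and $\subGp{u_j} \leq B_j$ have matching orders, so that the amalgam is genuinely an amalgamated free product rather than a mere pushout; this is handled by combining the one-relator calculation for $B_j$ with the inductive preservation of orders via $\phi_j$.
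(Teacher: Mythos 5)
Your proposal is correct and takes essentially the same route as the paper: the same Tietze manipulation exhibiting $G_{j-1}$ as the amalgam $G_j \ast_{A_j} B_j$ over $z_j = u_j$, the same appeal to the Karrass--Magnus--Solitar theorem (Lyndon--Schupp IV.5.2) for the exact order of $u_j$ in the one-relator group $B_j$, and the same chain of factor embeddings to obtain the consistency of orders and the embedding of $H$ into $G$. Your bookkeeping --- defining $m_j$ as the order of $z_j$ in $G_j$ (equivalently in $H$), proving redundancy of $u_j^{m_j}=1$ via the natural homomorphism $H \to G_{j-1}$ (which, to be precise, sends $z_i \mapsto z_i$ for $i < j$ and $z_i \mapsto u_i$ for $i \geq j$), and only afterwards reading off $\operatorname{ord}_{G_{j-1}}(u_j)=m_j$ from the embedding of the factor $B_j$ --- is if anything a slightly tighter justification of the order-matching step that the paper asserts more briefly.
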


\begin{proof}

	The claims that $G_0 = G$ and $G_k = H \ast \FG(X_0)$ are immediate consequences of the definitions.

	We know that $G_{j-1}$ takes the form
\begin{equation*}
	\gpPres[r_i(z_1, \ldots, z_{j-1}, u_j, \ldots u_k) = 1 \, (i \in I)][X_0, z_1, \ldots z_{j-1}, X_j, \ldots, X_k].
\end{equation*}
We now introduce a new generator $z_j$ and set it equal to the word $u_j$.
We can also introduce the relation $u_j^{m_j} = 1$ without changing the group as the order of $u_j$ is already $m_j$ by assumption.
Together these changes give
\begin{align*}
	\Gp \langle X_0, z_1, & \ldots z_{j-1}, X_j, \ldots, X_k \mid \\
	&r_i(z_1, \ldots, z_{j-1}, u_j, \ldots u_k) \, (i \in I),
	z_j = u_j, u_j^{m_j} = 1 \rangle
\end{align*}
which may be separated out into the amalgamated product
\begin{align*}
	\gpPres[u_j^{m_j} = 1][X_j]
	\ast_{u_j = z_j} 
	\Gp \langle X_0, z_1, & \ldots z_{j}, X_{j+1}, \ldots, X_k \mid \\
	&r_i(z_1, \ldots, z_{j}, u_{j+1}, \ldots u_k) \, (i \in I) \rangle
\end{align*}
The first part of this is equal $G_j$, the second part to $B_j$ and the amalgamation is over the subgroups generated by $z_j$ and $u_j$ respectively, matching $A_j$.
By \cite[Theorem IV.$5.2$]{LynSch01} if a group of the form $\gpPres[u_j^{m_j} = 1][X_j]$ then $u_j$ has order precisely $m_j$ if $m_j$ finite.
The order of $z_j$ in $G_j$ will be $m_j$ as a consequence of the relations $r_i( z_1, \ldots z_j, u_{j+1}, \ldots, u_k) = 1$ just as $u_j$ was of order $m_j$ in $G_{j-1}$ as a consequence of the relations $r_i( z_1, \ldots z_{j-1}, u_j, \ldots, u_k) = 1$.
Therefore $z_j$ and $u_j$ generate isomorphic subgroups in $G_j$ and $B_j$ respectively and so the amalgamation is valid.

	Finally, we note that as $G_j$ embeds in $G_{j-1}$, for $1 \leq j \leq k$, under the embedding $x \mapsto x$, for $x \in X \setminus \left( \Sigma_{i=1}^j X_i \right)$, $z_i \mapsto z_i$, for $1 \leq i \leq j - 1$ and $z_j \mapsto  u_j$ it follows that the $u_j$ (and $z_j$) have consistent order across the $G_i$ in the sense described.
In particular this order is either $m_j$ if $m_j > 0$ or is not of finite order otherwise. 

\end{proof}

	Before we can approach the prefix membership problem we first need to introduce another result \cite[Lemma $5.6$]{DolGra21}.

\begin{lemma}	\label{lem:genOfSubMonOfAmal}
	Let $G = B \ast_A C$ and let $U$ be a finite subset of $B \cup C$ such that $M = \subMon{U}$ contains $A$.
Then $M \cap B$ and $M \cap C$ are generated by $(U \cap B) \cup A$ and $(U \cap C) \cup A$ respectively.
\end{lemma}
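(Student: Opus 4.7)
The plan is to verify $M \cap B = \subMon{(U \cap B) \cup A}$; the statement for $M \cap C$ follows by a symmetric argument. One inclusion is immediate, since $U \cap B \subseteq U \subseteq M$ and $A \subseteq M$ by hypothesis, while both generating sets lie in $B$. For the reverse inclusion, given $m \in M \cap B$ I would write $m = u_1 u_2 \cdots u_n$ with each $u_i \in U$ and then group maximal consecutive runs of factors lying in the same free factor to obtain $m = w_1 w_2 \cdots w_k$, where consecutive $w_j$ lie in different factors of $B \ast_A C$ and each $w_j$ is a product of elements of $U \cap B$ or of $U \cap C$ according to which factor it inhabits. I would then induct on this \emph{syllable length} $k$.

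If $k = 1$, then either $w_1 \in B$, whence the representation already lies in $\subMon{U \cap B}$, or $w_1 \in C$, but then $m \in B \cap C = A$, which is part of our candidate generating set. For $k \geq 2$, I would invoke the normal form theorem for amalgamated free products: since $m \in B$, the alternating product $w_1 w_2 \cdots w_k$ cannot be a reduced sequence, so some syllable $w_i$ must lie in the amalgamated subgroup $A$. If $1 < i < k$, then $w_{i-1}$ and $w_{i+1}$ share a factor (say $B$), and since $w_i \in A \subseteq B$ the three consecutive syllables merge into a single syllable in $B$ that is itself a product of elements of $(U \cap B) \cup A$; crucially here we use the assumption $A \subseteq M$, which puts $A$ into our generating set. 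The boundary cases $i \in \lbrace 1, k \rbrace$ are analogous, merging $w_i$ into its lone adjacent syllable to cut the alternating length by one. In every case the syllable length strictly decreases, so the inductive hypothesis completes the proof.

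The main obstacle is nothing beyond a clean invocation of the normal form theorem: the assertion that an unreduced alternating product of syllable length $k \geq 2$ which represents an element of $B$ must contain a syllable in $A$ is the contrapositive of the classical fact that a reduced alternating sequence of length at least $2$ in $B \ast_A C$ cannot represent an element of a single factor, which is standard (see, for example, Lyndon and Schupp).
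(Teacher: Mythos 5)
Your proof is correct, but there is nothing in the paper to compare it with: the paper states this lemma purely as a quotation of \cite[Lemma $5.6$]{DolGra21} and gives no proof of its own. Your argument is the standard normal-form argument for amalgamated free products and works: the easy inclusion $\subMon{(U\cap B)\cup A}\subseteq M\cap B$ uses $A\subseteq M$, and for the reverse inclusion the key step, that an alternating product of syllables none of which lies in $A$ is a reduced sequence and hence (having length at least $2$) cannot represent an element of a factor, is exactly the consequence of the normal form theorem you cite. The one point worth making explicit in a final write-up is the form of the induction hypothesis: after the first merge the syllables are no longer products of elements of $U$ alone but of $(U\cap B)\cup A$ or $(U\cap C)\cup A$ (the absorbed syllable contributes an element of $A$, not of $U$), so the statement being inducted on should be phrased for alternating products of syllables over these enlarged sets; your merging step already treats them this way, and this is precisely where the hypothesis $A\subseteq M$ earns its keep. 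With that phrasing tightened, the induction on syllable length goes through exactly as you describe, the boundary cases $i\in\lbrace 1,k\rbrace$ included.
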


	Similarly to the previous section, we now apply our understanding of the structure to provide a set of conditions on the prefix monoid of a group with such a factorisation.

\begin{theorem}	\label{thm:PMPforDA}

	Let $M = \invPres[r_i(u_1, u_2, \ldots, u_k) = 1 \, (i \in I)]$, where $k > 1$, be an inverse monoid and let $G = \gpPres[r_i(u_1, u_2, \ldots, u_k) = 1 \, (i \in I)]$ be its maximal group image.

	If all the following hold
\begin{itemize}
	\item The factorisation into $u_1, u_2, \ldots, u_k \in \freemon$ is alphabetically disjoint and conservative in $G$ (in particular if they form a unital factorisation in $M$);
	\item The word problem for $G$ is decidable;
	\item For every factor $u_j$ of finite order, $m_j$, in $G$ the group $\gpPres[u_j^{m_j} = 1]$ has decidable prefix membership problem;
	\item For every factor $u_j$ not of finite order in $G$ the subgroup generated by $z_j$ has decidable membership within $G_j$ (where $z_j$ and $G_j$ are as defined in Lemma \ref{lem:DAstructure})
\end{itemize}
then $G$ has decidable prefix membership problem.
Furthermore $M_{EU}$ has decidable word problem (in particular $M$ has decidable word problem if it is E-unitary).

\end{theorem}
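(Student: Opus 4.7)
The plan is to adapt the strategy of Theorem \ref{thm:PMPforUML} to the iterated amalgamation structure of Lemma \ref{lem:DAstructure}, peeling off one amalgamated factor at a time by repeated application of Theorem \ref{thm:subMonOfAmalGpProd}. By conservativity and Lemma \ref{lem:prefixMonConFactorForm}, the prefix monoid of $G$ is
\begin{equation*}
P = \subMon{\bigcup_{j=1}^{k} \bigcup_{\varepsilon \in \lbrace -1, 1 \rbrace} \pref(u_j^\varepsilon)} \leq G.
\end{equation*}
For $0 \leq j \leq k$ I define
\begin{equation*}
Q_j = \subMon{\bigcup_{i > j} \bigcup_{\varepsilon \in \lbrace -1, 1 \rbrace} \pref(u_i^\varepsilon) \, \cup \, \bigcup_{i \leq j} \subGp{z_i}} \leq G_j,
\end{equation*}
so that $Q_0 = P$ and $Q_k = H$ as a submonoid of $G_k = H \ast \FG(X_0)$. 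Alphabetic disjointness places each generator of $Q_{j-1}$ in either $G_j$ or $B_j$; together with $A_j = \subGp{u_j} \subseteq Q_{j-1}$ (since $u_j \in \pref(u_j)$ and $u_j^{-1} \in \pref(u_j^{-1})$) and Lemma \ref{lem:genOfSubMonOfAmal} applied to $G_{j-1} = G_j \ast_{A_j} B_j$, a direct calculation gives $Q_j = Q_{j-1} \cap G_j$ and $Q_{j-1} \cap B_j = \subMon{\pref(u_j) \cup \pref(u_j^{-1}) \cup \subGp{u_j}}$.

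I then argue by downward induction on $j$ that membership in $Q_j$ within $G_j$ is decidable. The base case $j = k$ is immediate: $Q_k = H$ inside the free product $H \ast \FG(X_0)$ has decidable membership via the usual free product normal form. For the step $j \to j-1$, I invoke Theorem \ref{thm:subMonOfAmalGpProd} on $G_{j-1} = G_j \ast_{A_j} B_j$ with $M = Q_{j-1}$. The word problems of $G_j$ (inherited from $G$ via the embedding $z_i \mapsto u_i$ of Lemma \ref{lem:DAstructure}) and $B_j$ (a one-relator or free group) are decidable. Membership of $A_j = \subGp{u_j}$ in $B_j$ is decidable by Benois' theorem when $B_j$ is free and by a finite-subgroup check otherwise; membership of $A_j = \subGp{z_j}$ in $G_j$ is decidable by a finite-subgroup check and the word problem of $G_j$ when $m_j > 0$, and by the final hypothesis when $m_j = 0$. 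Decidability of $Q_j$ in $G_j$ is the inductive hypothesis.

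The main obstacle, and the new ingredient beyond the uniquely marked case, is identifying $Q_{j-1} \cap B_j$ with a submonoid whose membership problem is available. When $m_j > 0$, the relation $u_j^{-1} = u_j^{m_j - 1}$ in $B_j$ yields
\begin{equation*}
Q_{j-1} \cap B_j = \subMon{\pref(u_j) \cup \pref(u_j^{-1}) \cup \subGp{u_j}} = \subMon{\pref(u_j^{m_j})} \leq B_j,
\end{equation*}
which is precisely the prefix monoid of the one-relator group $B_j = \gpPres[u_j^{m_j} = 1][X_j]$; decidability is then exactly the third hypothesis of the theorem. When $m_j = 0$, we have $B_j = \FG(X_j)$ and $Q_{j-1} \cap B_j$ is a finitely generated submonoid of a free group, hence rational, so membership is decidable by Benois' theorem. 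Running the induction down to $j = 0$ gives decidability of $P = Q_0$ in $G$, which is the prefix membership problem for $G$, and applying Theorem \ref{thm:PMP+GWP=IWP} delivers decidability of the word problem for $M_{EU}$, and hence for $M$ itself when $M$ is E-unitary.
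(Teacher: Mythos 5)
Your proposal is correct and follows essentially the same route as the paper: the same descending chain of submonoids over the iterated amalgamation $G_{j-1} = G_j \ast_{A_j} B_j$ from Lemma \ref{lem:DAstructure}, the same use of Lemma \ref{lem:genOfSubMonOfAmal} to identify the intersections, and the same downward induction via Theorem \ref{thm:subMonOfAmalGpProd} with the identical case split on the order of $u_j$. The only cosmetic difference is that in the finite-order case you identify $Q_{j-1} \cap B_j$ with the prefix monoid of $B_j$ directly via $u_j^{-1} = u_j^{m_j-1}$, where the paper instead notes that $(u_j)\cdots(u_j)$ is a conservative factorisation and cites Lemma \ref{lem:prefixMonConFactorForm}.
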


\begin{proof}
	We carry over notation from Lemma \ref{lem:DAstructure}.

	Let $U_j = \lbrace z_{j_1}^{\varepsilon}, \pref(u_{j_2}^{\varepsilon}) \mid 1 \leq j_1 \leq j < j_2 \leq k, \varepsilon \in \lbrace -1, 1 \rbrace \rbrace$ for $0 \leq j \leq k$, also let $M_j = \subMon{U_j} \leq G_j$.
We claim the following:
\begin{enumerate}
	\item That $\subMon{U_{j-1}} = \subMon{ U_j \cup \pref(u_j) \cup \pref(u_j^{-1}) } \leq  G_j \ast_{A_j} B_j$
	\item That $M_{j-1} \cap G_j = M_j \leq G_j \ast_{A_j} B_j= G_{j-1}$
	\item That $M_{j-1} \cap B_j = \pref(u_j) \cup \pref(u_j^{-1}) \leq G_j \ast_{A_j} B_j$
\end{enumerate}
for $1 \leq j \leq k$.

	That $U_{j-1} \subseteq U_j \cup \pref(u_j) \cup \pref(u_j^{-1})$ follows immediately from the definition of $U_j$.
As $z_j^{\varepsilon} = u_j^{\varepsilon} \in \pref(u_j^{\varepsilon})$ in $G_j \ast_{A_j} B_j$, we have that
\begin{equation*}
	U_j = U_j \cup \lbrace z_j, z_j^{-1} \rbrace \supseteq U_j \cup \pref(u_j) \cup \pref(u_j^{-1}).
\end{equation*}
Therefore $U_{j-1} = U_j \cup \pref(u_j) \cup \pref(u_j^{-1})$ in $G_{j-1}$ and $(1)$ is an immediate consequence.

	By $(1)$, $M_{j-1} = \subMon{U_j \cup \pref(u_j) \cup \pref(u_j^{-1})}$ in $G_j \ast_{A_j} B_j$.
Further by Lemma \ref{lem:genOfSubMonOfAmal} we have that $M_{j-1} \cap G_j$ is generated by
\begin{equation*}
	\left( \left( U_j \cup \pref(u_j) \cup \pref(u_j^{-1}) \right) \cap G_j \right) \cup A_j.
\end{equation*}
The generators $p \in \pref(u_j^{\varepsilon})$ where $p \neq u_j^{\varepsilon} = z_j^\varepsilon$ do not belong to $G_j$ and $A_j$ is generated by $z_j \in U_j$.
Hence $M_{j-1} \cap G_j$ is the submonoid of $G_j$ generated by $U_j$ and is therefore equal to $M_j$, which proves $(2)$.

	Similarly we may apply Lemma \ref{lem:genOfSubMonOfAmal} to say that $M_{j-1} \cap B_j$ is generated by
\begin{equation*}
	\left( \left( U_j \cup \pref(u_j) \cup \pref(u_j^{-1}) \right) \cap B_j \right) \cup A_j.
\end{equation*}
The only members of $U_j \cap B_j$ are $z_j = u_j$ and $z_j^{-1} = u_j^{-1}$ and these are also the generators of $A_j$.
Therefore $M_{j-1} \cap B_j$ is the submonoid generated by $\pref(u_j)$ and $\pref(u_j^{-1})$, which proves $(3)$.

	We shall now apply these claims to show inductively that each $M_j$ has decidable membership within its respective $G_j$.
Firstly, the monoid $M_k$ is equal to $H$, the group generated by all the $z_j$, and as $G_k = H \ast \FG(X_0)$ membership is decidable.

	Suppose that $M_j \leq G_j$ is decidable.
We will now show this implies that $M_{j-1} \leq G_{j-1}$ has decidable membership via use of Theorem \ref{thm:subMonOfAmalGpProd}.

	Firstly, we see that
\begin{equation*}
	M_{j-1} 
	= \subMon{ (M_{j-1} \cap G_j) \cup (M_{j-1} \cap B_j) }
	\leq G_j \ast_{A_j} B_j
	= G_{j-1}
\end{equation*}
as a consequence of the claims we proved above.

	The group $G_j$ has decidable word problem as it embeds naturally in $G$, which has decidable word problem by assumption.
The group $B_j$ is either a free group or a one relator group both classes which always have decidable word problem.
Both are finitely generated.

	The membership of $A_j$ in $B_j$ is decidable as either $B_j$ is a free group, in which case we may apply Benois\textsc{\char13}s Theorem, or $u_j$ has a finite order and as such $A_j$, which is generated $u_j$ alone, is a finite subgroup of $B_j$ and so $B_j$ having a decidable word problem suffices to decide membership.
The membership of $A_j$ in $G_j$ is decidable if the order of the $u_j$ is finite by the same argument as for $B_j$.
If $u_j$ is not of finite order then as $A_j$ is the subgroup of $G_j$ generated by $z_j$ which has decidable membership by assumption.

	The monoid $M_{j-1} \cap G_j$ is equal to the monoid $M_j$ which has decidable membership in $G_j$ by inductive hypothesis.
The monoid $M_{j-1} \cap B_j$ is equal to the monoid $\subMon{\pref(u_j) \cup \pref(u_j^{-1})}$.
If $u_j$ is of finite order in $G$ then $B_j = \gpPres[u_j^{m_j} = 1][X_j]$ and $(u_j)(u_j)\ldots(u_j)$ is a conservative factorisation of the relator.
Therefore by Lemma \ref{lem:prefixMonConFactorForm} the monoid $M_{j-1} \cap B_j$ is the prefix monoid of $B_j$ and which has decidable membership by assumption.
If $u_j$ is not of finite order then $B_j$ is a free group and all submonoids have decidable membership as a consequence of Benois' theorem.
Thus we have satisfied the conditions of Theorem \ref{thm:subMonOfAmalGpProd} and $M_{j-1}$ has decidable membership within $G_{j-1}$.

	Therefore by induction $M_0$ has decidable membership in $G_0 = G$ and as $M_0$ is generated by the prefixes of both the factors and the inverses of the factors of a conservative factorisation of $G$ it is equal to the prefix monoid by Lemma \ref{lem:prefixMonConFactorForm}.
Thus $G$ has decidable prefix membership problem.
\end{proof}

\begin{example}
	If we let $Y_i = \lbrace a_i, b_i, c_i \rbrace$, $u_i \equiv a_i^2 b_i^3$ and $v_i \equiv c_i^5$ for $i = 1, 2$.
Then we may define the inverse monoid
\begin{equation*}
	M = \invPres[u_i v_i^2 u_i v_i u_i = 1 \, (i=1,2), u_1 u_2^{-1} = 1][Y_1, Y_2]
\end{equation*}
(this is the same presentation as Example \ref{exm:EuForDAexample})
with maximal group image
\begin{equation*}
	G = \gpPres[u_i v_i^2 u_i v_i u_i = 1 \, (i=1,2), u_1 u_2^{-1} = 1][Y_1, Y_2].
\end{equation*}
It may easily be seen that the factorisation into $u_i$ and $v_i$ is alphabetically disjoint; that it is also unital in $M$ may be shown by applying the Adjan algorithm (see \cite[Section $3$]{DolGra21} for a fuller description but in this case it suffices to observe that the relators are units and then make repeated use of the rule that $w_1 w_2, w_2 w_3 \in U_M$ implies $w_1, w_2, w_3 \in U_M$).

	If we let $K_i = \gpPres[u_i v_i^2 u_i v_i u_i = 1][Y_i]$ (for $i=1,2$) then as the factorisation into $u_i$ and $v_i$ is alphabetically disjoint the subgroups $\subGp{u_i}$ and $\subGp{v_i}$ have decidable membership within $K_i$ by Lemma \ref{lem:CyRed-LettersGenDecSubGps} and the groups $K_i$ themselves have decidable word problem by Magnus's Theorem.
Therefore $G = K_1 \ast_{u_1=u_2} K_2$ has decidable word problem by Lemma \ref{lem:amalProdDec}.

	The words $u_i$ and $v_i$ are not of finite order within $K_i$ by the Freiheitssatz.
It follows from $K_i$ embedding in $G$ that the words are not of finite order in $G$ either.
Therefore there are no factors of finite order.

	The amalgamated parts of $K_1$ and $K_2$ are the groups generated by $u_1$ and $u_2$ respectively.
We have seen above that membership in these groups is decidable within their respective $K_i$.
By Theorem \ref{thm:subMonOfAmalGpProd} (or by use of the normal form theorem for amalgamated free products, see for instance \cite[Theorem IV.$2.6$]{LynSch01}) this is sufficient to decide membership in $K_i$ within $G$.
From there we can then decide membership in $\subGp{u_i}$ or $\subGp{v_i}$ within $K_i$ and thus within $G$.
By Lemma \ref{lem:DAstructure} the above is sufficient to decide membership in the relevant $\subGp{z_j}$ within $G_j$, as this problem will embed in the larger one.

Thus we may apply Theorem \ref{thm:PMPforDA} and find that $G$ has decidable prefix membership problem and, as we have already seen in Example \ref{exm:EuForDAexample} that $M$ is E-unitary, that $M$ has decidable word problem.

\end{example}

\begin{remark}
	We note that the original theorem \cite[Theorem $5.7$]{DolGra21} may be recovered from this result.
Suppose we have some monoid $M = \invPres[w=1]$ for a cyclically reduced $w$ which has an alphabetically disjoint factorisation.
As each factor is written over only a portion of the alphabet that the whole relator is written over by the Freiheitsatz we know that no factor can have finite order.
The condition on factors of non-finite order is then satisfied Lemma \ref{lem:CyRed-LettersGenDecSubGps} as $w$ is cyclically reduced.
Then we know that its maximal group image has decidable word problem by Magnus's Theorem and that it is E-unitary by Theorem \ref{thm:cycRedIsEu} as $w$ is cyclically reduced.
Therefore by Theorem \ref{thm:PMPforDA} $M$ has decidable word problem.
\end{remark}

	We now consider the relationship between the word problems of the $G_j$ from Lemma \ref{lem:DAstructure}.

\begin{lemma}		\label{lem:GnWPImpliesGmWP}

	Let $G = \gpPres[r_i(u_1, u_2, \ldots, u_k) = 1 \, (i \in I)]$ be a group with alphabetically disjoint factorisation.
For $1 \leq j \leq k$, let
\begin{equation*}
	G_j = \gpPres[r_i( z_1, \ldots z_j, u_{j+1}, \ldots, u_k) = 1 \, (i \in I)][X_0, z_1, \ldots, z_j, X_{j+1}, \ldots, X_k]
\end{equation*}
and let $1 \leq n,m \leq k$ and suppose that $G_n$ has decidable word problem.

	Then the word problem of $G_m$ is decidable if:
\begin{itemize}
	\item Either $n \leq m$
	\item Or $m < n$ and all $u_j$, where $m \leq j < n$, of not of finite order are such that $\subGp{z_j} \leq G_j$ has decidable membership.
\end{itemize}

	Consequently, if $\subGp{u_j} \leq G$ has decidable membership for all factors, $u_j$, of non-finite order then $H = \gpPres[r_i(z_1, \ldots, z_k) = 1 \, (i \in I)][z_1, \ldots, z_k]$ having decidable word problem implies that $G$ has decidable word problem.

\end{lemma}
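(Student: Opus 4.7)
The plan is to exploit the chain of embeddings $G_k \hookrightarrow G_{k-1} \hookrightarrow \cdots \hookrightarrow G_0 = G$ coming from the amalgamations $G_{j-1} = G_j \ast_{A_j} B_j$ of Lemma \ref{lem:DAstructure}, handling the two cases by travelling along this chain in opposite directions. When $n \leq m$, the group $G_m$ embeds into $G_n$ via the composition of the maps from Lemma \ref{lem:DAstructure}; this embedding fixes every generator other than $z_j$ for $n < j \leq m$, which it sends to the word $u_j$. So given any word in the generators of $G_m$ I would effectively rewrite it via these substitutions into a word in the generators of $G_n$, thereby reducing the word problem of $G_m$ to the (decidable) word problem of $G_n$.

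When $m < n$ I would instead induct on $n - m$, using Lemma \ref{lem:amalProdDec} at each step to lift decidability of the word problem along the amalgamation $G_{j-1} = G_j \ast_{A_j} B_j$. The three non-inductive hypotheses of Lemma \ref{lem:amalProdDec} are then to be verified as follows: $B_j$ is either $\FG(X_j)$ or the one-relator cyclic group $\gpPres[u_j^{m_j}=1][X_j]$, both of which have decidable word problem (the latter by Magnus's Theorem); membership of $A_j = \subGp{u_j}$ in $B_j$ follows from Benois's Theorem in the free case and from $A_j$ being a finite cyclic subgroup of $B_j$ in the torsion case; and membership of $A_j = \subGp{z_j}$ in $G_j$ reduces either to $A_j$ being a finite subgroup, when $u_j$ has finite order, or to the supplied hypothesis, when it does not. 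Each of these ingredients is routine once the right side of each amalgamation is identified, so the main delicacy is just bookkeeping the index range.

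For the consequence, $H$ having decidable word problem yields $G_k = H \ast \FG(X_0)$ decidable word problem by Lemma \ref{lem:freeProdDec}, so I would apply the main result with $n = k$ and $m = 0$. The main task is to convert the hypothesis about $\subGp{u_j} \leq G$ into the hypothesis needed on $\subGp{z_j} \leq G_j$. Since the embedding $G_j \hookrightarrow G$ sends $\subGp{z_j}$ isomorphically onto $\subGp{u_j} \leq G$, decidable membership of $\subGp{u_j}$ in $G$ translates, via the substitution $z_i \mapsto u_i$ ($i \leq j$) applied to any input word for $G_j$, into decidable membership of $\subGp{z_j}$ in $G_j$; this translation is the only genuinely fiddly step in the whole argument, and once it is in place the main statement immediately yields decidability of the word problem of $G = G_0$.
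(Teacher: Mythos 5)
Your proposal is correct and follows essentially the same route as the paper: the case $n \leq m$ via the embedding chain from Lemma \ref{lem:DAstructure}, the case $m < n$ by induction on $n-m$ applying Lemma \ref{lem:amalProdDec} to $G_{j-1} = G_j \ast_{A_j} B_j$ with exactly the same verification of the word-problem and membership hypotheses, and the consequence via $G_k = H \ast \FG(X_0)$. Your explicit translation of decidable membership of $\subGp{u_j} \leq G$ into decidable membership of $\subGp{z_j} \leq G_j$ along the embedding $z_i \mapsto u_i$ is a welcome bit of extra care that the paper leaves implicit, and the minor slip of calling $B_j = \gpPres[u_j^{m_j}=1][X_j]$ ``cyclic'' does not affect the argument since only Magnus's Theorem is used.
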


\begin{proof}
	Suppose that $n \leq m$.
Then $G_m$ embeds naturally into $G_n$ and so deciding the latter's word problem is sufficient to decide the former.

	Suppose that $m < n$.
It suffices to demonstrate the case for $m = n - 1$, as the full result follows by induction.
Recall that $G_{n - 1} = G_{n} \ast_{A_n} B_n$.
By Lemma \ref{lem:amalProdDec} to show that this group amalgam has decidable word problem it  will suffice to demonstrate that the two groups being amalgamated have decidable problem and that membership in the amalgamating subgroup is decidable within each of these two groups.

	As $B_n$ is either free or a one relator group it has decidable word problem.
If $u_n$ has finite order in $B_n$, this means that $A_n$ which is generated by $u_n$ has finitely many members.
Otherwise $B_n$ is free and membership in $A_n$ is decidable by Benois' Theorem.

	By assumption $G_n$ has decidable word problem.
If the order of $z_n$, which (by Lemma \ref{lem:DAstructure}) is equal to the order of $u_n$, is finite then as above $A_n$ is finite and thus has decidable membership within $G_n$.
If however the order of $z_n$ is not finite the decidability of membership in $\subGp{z_n}$ within $G_n$ is decidable by assumption.

	Therefore $G_{n-1}$ has decidable word problem and inductively so does $G_m$.

	Hence if $H$ has decidable word problem, which is equivalent to the word problem of $G_k = H \ast \FG(X_0)$, and each of the non finite order factors generate a subgroup with decidable membership within $G$ then $G$ has decidable word problem.

\end{proof}

	We can now once again consider the word problem for the group of units of a suitably factorised inverse monoid.

\begin{corollary}	\label{cor:DAunits}

	Let $M = \invPres[r_i(u_1, u_2, \ldots, u_k) \, (i \in I)]$ be an E-unitary inverse monoid, let $G = \gpPres[r_i(u_1, u_2, \ldots, u_k) \, (i \in I)]$  be its maximal group image and let $U_M$ be its group of units.
If the factor words $u_1, u_2, \ldots, u_k$ are the minimal invertible pieces of $M$ and alphabetically disjoint and:
\begin{itemize}
	\item For every factor $u_j$ of finite order, $m_j$, in $G$ the group $\gpPres[u_j^{m_i} = 1]$ has decidable prefix membership problem;
	\item For every factor $u_j$ which does not have finite order in $G$ the membership of $\subGp{u_j} \leq G$ is decidable;
\end{itemize}
then the following are equivalent:

\begin{enumerate}
	\item The word problem of $G$ is decidable.
	\item The word problem of $M$ is decidable.
	\item The word problem of $U_M$ is decidable.
\end{enumerate}

\end{corollary}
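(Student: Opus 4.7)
The plan is to prove the equivalence via the chain $(1) \Rightarrow (2) \Rightarrow (3) \Rightarrow (1)$, leveraging the results of this section together with Lemma \ref{lem:MIPgenUM}.

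For $(1) \Rightarrow (2)$, I would verify all the hypotheses of Theorem \ref{thm:PMPforDA}. The factorisation into minimal invertible pieces is unital in $M$ by definition, and hence conservative in $G$ by Theorem \ref{thm:UFisCF}; alphabetical disjointness and decidability of the word problem of $G$ are given; the condition on factors of finite order is imposed outright. For factors of non-finite order, Theorem \ref{thm:PMPforDA} asks for decidable membership of $\subGp{z_j} \leq G_j$, whereas the corollary's hypothesis is phrased in terms of $\subGp{u_j} \leq G$. These translate via the embedding of Lemma \ref{lem:DAstructure}: since $G_j$ embeds into $G$ under $z_i \mapsto u_i$, one can decide whether an element of $G_j$ lies in $\subGp{z_j}$ by computing its image in $G$ and testing membership in $\subGp{u_j}$. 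Theorem \ref{thm:PMPforDA} together with the E-unitarity of $M$ then gives decidability of the word problem of $M$.

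The implication $(2) \Rightarrow (3)$ is immediate: by Lemma \ref{lem:MIPgenUM}, $U_M$ is the subgroup of $M$ generated by $u_1, \ldots, u_k$, so any equation between words over $\lbrace u_1, \ldots, u_k \rbrace$ can be rewritten as an equation between words over $\overline{X}$ and decided in $M$.

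The main work is $(3) \Rightarrow (1)$. The key identification is $U_M \cong H$, where $H = \gpPres[r_i(z_1, \ldots, z_k) = 1 \, (i \in I)][z_1, \ldots, z_k]$ is the abstract group from Lemma \ref{lem:DAstructure}. First, the minimum group congruence $\sigma : M \to G$ is injective on $U_M$, since any unit in $\sigma^{-1}(1)$ must be idempotent by E-unitarity and the only idempotent unit is $1$. Combined with Lemma \ref{lem:MIPgenUM} this yields $U_M \cong \sigma(U_M) = \subGp{u_1, \ldots, u_k} \leq G$, which by Lemma \ref{lem:DAstructure} is a faithful image of $H$. Hence decidability of the word problem for $U_M$ transfers to $H$, and the concluding statement of Lemma \ref{lem:GnWPImpliesGmWP}, applied with the hypothesis on non-finite order factors, lifts this to decidability of the word problem of $G$.

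The main obstacle throughout is bookkeeping rather than genuine mathematical content: one must carefully match the corollary's hypothesis of decidable membership in $\subGp{u_j} \leq G$ against the formulations of Theorem \ref{thm:PMPforDA} and Lemma \ref{lem:GnWPImpliesGmWP} in terms of the intermediate groups $G_j$. Each such translation, in both directions, reduces to invoking the natural embedding $G_j \hookrightarrow G$ provided by Lemma \ref{lem:DAstructure}, which preserves the relevant generators and hence transports subgroup membership questions faithfully between the two settings.
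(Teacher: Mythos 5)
Your proof is correct and follows essentially the same route as the paper: $(1)\Rightarrow(2)$ by checking the hypotheses of Theorem \ref{thm:PMPforDA}, $(2)\Rightarrow(3)$ since $U_M$ is generated by the $u_i$ inside $M$, and $(3)\Rightarrow(1)$ by identifying $U_M$ with $H$ via Lemmas \ref{lem:MIPgenUM} and \ref{lem:DAstructure} and then applying the concluding statement of Lemma \ref{lem:GnWPImpliesGmWP}. Your added details --- translating membership in $\subGp{z_j}\leq G_j$ to membership in $\subGp{u_j}\leq G$ through the embedding, and using E-unitarity to see that $\sigma$ is injective on $U_M$ --- are correct refinements of steps the paper treats more briefly.
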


\begin{proof}

	All the conditions of Theorem \ref{thm:PMPforDA} are satisfied, so $(1)$ implies $(2)$.
As $U_M$ is a submonoid of $M$, $(2)$ implies $(3)$.
By Lemma \ref{lem:MIPgenUM} the $u_i$ being minimal invertible pieces means that they generate $U_M$.
Moreover as they are invertible the structure of $U_M$ will be identical to the subgroup generated by the $u_i$ within $G$.
By Lemma \ref{lem:DAstructure} this is isomorphic to
\begin{equation*}
	H = \gpPres[r_i(z_1, \ldots, z_k) = 1 \, (i \in I)][z_1, \ldots, z_k]
\end{equation*}
and so by Lemma \ref{lem:GnWPImpliesGmWP} we may say that $(3)$ implies $(1)$.

\end{proof}

	At this point it is worth noting that determining the order of a word is not necessarily an easy matter.
In fact the general question of whether $x = y^n$ for some $n$, called the \textit{power problem}, is strictly harder than the word problem even if we restrict to when $x = 1$, in which case it is called the \textit{order problem}, see \cite{MC70} and \cite{Col73}.
This motivates questioning the necessity of the conditions which rely on referencing the order of the factors.
In particular it would be useful to show that they always held, at least under the circumstances of Corollary \ref{cor:DAunits}.

	Suppose that we consider $u_j$ which is of finite order in $G$.
This means that $u_j^{m_j} = 1$ in $G$, therefore $u_j^{m_j}$ is an idempotent in $M$ but it is also a product of units and therefore a unit itself.
The only idempotent unit in any inverse monoid is identity and so $u_j^{m_j} = 1$ in $M$ as well.
This means that if $u_j$ is a minimal invertible piece of $M$ then $u_j$ is a minimal invertible piece of $\invPres[u_j^{m_j} = 1][X_j]$ also, and hence we may deduce that there are no other pieces in the minimal factorisation.
Thus it would remove a condition from Corollary \ref{cor:DAunits} if there was a positive answer to the following question:
\begin{question}
	Does the group $\gpPres[u^m = 1]$, where $m \geq 1$, have decidable prefix membership problem if the only minimal invertible piece of the corresponding inverse monoid $\invPres[u^m = 1]$ is $u$?
\end{question}

	Suppose we consider when $u_j$ is of infinite order in $G$.
In the case of a group with a single cyclically reduced relator we already know by Lemma \ref{lem:CyRed-LettersGenDecSubGps} that a subgroup generated some subset of the relators has decidable membership.
We might ask:
\begin{question}
	For a group $G = \gpPres[r_i = 1 \, (i \in I)]$ with decidable word problem and $x \in X$, under what conditions is membership in $\subGp{x}$ within $G$ decidable?
\end{question}

	More generally, by answering the above questions or otherwise:

\begin{question}
	Is the word problem of an E-unitary special inverse monoid equivalent to the word problems of its maximal group image and group of units if the minimal invertible pieces of its relators are alphabetically disjoint? 
\end{question}

	Finally, sets of words being uniquely marked or alphabetically disjoint are just specific examples of properties which are what Gray and Ruskuc \cite[Definition $3.6$]{GraRus23} called being \textit{free for substitution}, so we might question whether this more general property is sufficient:

\begin{question}
	Let $M = \invPres[r_i(u_1, \ldots, u_k) = 1 \, (i \in I)]$ be an E-unitary inverse monoid and let $G = \gpPres[r_i(u_1, \ldots, u_k) = 1 \, (i \in I)]$ be its maximal group image.
Further let $u_1, \ldots, u_k$ be the minimal invertible pieces and suppose that
\begin{equation*}
	\gpPres[r_i(z_1, \ldots, z_k) = 1 \, (i \in I)][z_1, \ldots, z_k]
	\cong
	\subGp{u_1, \ldots, u_k} \leq G.
\end{equation*}
Are the word problems of $G$, $M$ and $U_M$ equivalent?
\end{question}

\bibliography{fpf-citations}{}
\bibliographystyle{plain}

\end{document}